\newtheorem{theorem}{Theorem}[section]
\newtheorem{lemma}[theorem]{Lemma}
\newtheorem{proposition}[theorem]{Proposition}
\theoremstyle{remark}
\newtheorem*{remark}{Remark}
\numberwithin{equation}{section}
\def\im{\mathrm{i}}
\def\d{\mathrm{d}}
\def\e{\mathrm{e}}
\def\O{\mathcal{O}}
\def\Re{\operatorname{Re}}
\def\arccot{\operatorname{arccot}}
\def\Im{\operatorname{Im}}
\def\erf{\operatorname{erf}}
\def\erfc{\operatorname{erfc}}
\definecolor{customgreen}{rgb}{0.0, 0.5, 0.0}
\author[G. Nemes]{Gerg\H{o} Nemes}
\address{Department of Analysis, Alfr\'ed R\'enyi Institute of Mathematics, Re\'altanoda utca 13--15, Budapest H-1053, Hungary}
\email{nemes.gergo@renyi.hu}
\address{Department of Physics, Tokyo Metropolitan University, 1--1 Minami-osawa, Hachioji-shi, Tokyo 192-0397, Japan}
\email{nemes@tmu.ac.jp}
\dedicatory{Dedicated to Sir Michael V. Berry on the occasion of his 80$^{\text{th}}$ birthday.}
\keywords{asymptotic expansions, Stokes phenomenon, hyperasymptotics, gamma function}
\subjclass[2020]{41A60, 41A80, 33B15}
\begin{document}

\title[Dingle's final main rule, Berry's transition, and Howls' conjecture]{Dingle's final main rule, Berry's transition,\\ and Howls' conjecture}

\begin{abstract} The Stokes phenomenon is the apparent discontinuous change in the form of the asymptotic expansion of a function across certain rays in the complex plane, known as Stokes lines, as additional expansions, pre-factored by exponentially small terms, appear in its representation. It was first observed by G.~G.~Stokes while studying the asymptotic behaviour of the Airy function. R.~B.~Dingle proposed a set of rules for locating Stokes lines and continuing asymptotic expansions across them. Included among these rules is the ``final main rule" stating that half the discontinuity in form occurs on reaching the Stokes line, and half on leaving it the other side. M.~V.~Berry demonstrated that, if an asymptotic expansion is terminated just before its numerically least term, the transition between two different asymptotic forms across a Stokes line is effected smoothly and not discontinuously as in the conventional interpretation of the Stokes phenomenon. On a Stokes line, in accordance with Dingle's final main rule, Berry's law predicts a multiplier of $\frac{1}{2}$ for the emerging small exponentials. In this paper, we consider two closely related asymptotic expansions in which the multipliers of exponentially small contributions may no longer obey Dingle's rule: their values can differ from $\frac{1}{2}$ on a Stokes line and can be non-zero only on the line itself. This unusual behaviour of the multipliers is a result of a sequence of higher-order Stokes phenomena. We show that these phenomena are rapid but smooth transitions in the remainder terms of a series of optimally truncated hyperasymptotic re-expansions. To this end, we verify a conjecture due to C.~J.~Howls.
\end{abstract}
\maketitle\vspace{-4.8pt}

\section{Introduction}

During the course of a Stokes phenomenon, an asymptotic expansion of a function can change its form near certain rays in the complex plane, known as Stokes lines, through the seemingly discontinuous appearance of further series with exponentially small pre-factors. It was first observed by G.~G.~Stokes \cite{Stokes1864} while studying the asymptotic behaviour of the Airy function. Although negligible at the place of their birth, the exponentially small contributions introduced by the Stokes phenomenon can grow to significantly influence or dominate the behaviour of the function in other regions. As noted in \cite{Howls2004}, due to the omission of exponentially small terms in the subsequently adopted definition of an asymptotic expansion formulated by H.~Poincar\'e \cite{Poincare1886}, the Stokes phenomenon caused controversy, ambiguity and misunderstanding for over a century.

During his work in solid-state physics, R.~B.~Dingle encountered mathematical difficulties, associated with the evaluation of some integrals, leading to divergent asymptotic series. Dingle realised the limitations of Poincar\'e's prescription and that existing techniques for interpreting asymptotic series were often vague and severely limited in accuracy and range of applicability. Building on previous works by T.~J.~Stieltjes, J.~R.~Airey and others, Dingle developed a new and extensive formal, interpretative theory of asymptotic series, and summarised his findings in his exhaustive and defining 1973 book, \emph{Asymptotic Expansions: Their Derivation and Interpretation} \cite{Dingle1973}. In his book, Dingle proposed a set of rules for locating Stokes lines and continuing asymptotic expansions across them. To illustrate these rules in the simplest case of a compound asymptotic expansion consisting of just two asymptotic series, we consider the parabolic cylinder functions. Let $U(a,z)$ and $V(a,z)$ denote those solutions of the parabolic cylinder equation
\begin{equation}\label{eq1}
\frac{\d^2 w}{\d z^2 } = \left( \tfrac{1}{4}z^2 + a \right)w, \quad a\in \mathbb{R},
\end{equation}
which are respectively exponentially decreasing and increasing when $z$ is real and positive, and are real-valued for real $z$. The solution $U(a,z)$, up to scalar multiplication, is uniquely determined by these properties, whereas $V(a,z)$ may be defined uniquely through power series \cite[\href{https://dlmf.nist.gov/12.4}{\S12.4}]{NIST:DLMF}. By trial substitution to the equation \eqref{eq1}, the asymptotic expansions of these solutions are found to be
\begin{equation}\label{eq2}
U(a,z) \sim \e^{ - \frac{1}{4}z^2 } z^{ - a - \frac{1}{2}} \sum\limits_{n = 0}^\infty ( - 1)^n \frac{\left( \frac{1}{2} + a \right)_{2n} }{n!(2z^2)^n} 
\end{equation}
and
\begin{equation}\label{eq3}
V(a,z) \sim \sqrt {\frac{2}{\pi}} \e^{\frac{1}{4}z^2} z^{a - \frac{1}{2}} \sum\limits_{n = 0}^\infty \frac{\left( \frac{1}{2} - a \right)_{2n} }{n!(2z^2 )^n} ,
\end{equation}
for large positive $z$ and fixed $a$, with $(w)_n=\Gamma(w+n)/\Gamma(w)$. The factor $\sqrt{2/\pi}$ in the expansion of $V(a,z)$ is inserted to match the normalisation convention in the literature, e.g., \cite[\href{https://dlmf.nist.gov/12}{Ch. 12}]{NIST:DLMF}. The rays $\arg z = \pm \frac{\pi}{2}$ are Stokes lines for $U(a,z)$ where the first series \eqref{eq2} maximally dominates the second series \eqref{eq3} (Dingle's second rule). This also follows from the like signs and phases of the late terms in \eqref{eq2} (Dingle's first rule). When the ray $\arg z = \frac{\pi}{2}$ is crossed the Stokes phenomenon switches on a divergent series proportional to \eqref{eq3} (Dingle's fourth rule). When $\arg z = \pi$, the second series \eqref{eq3} is at peak dominance relative to the first series, so has a Stokes line there (Dingle's second rule). This is also clear from the like signs and phases of the late terms in \eqref{eq3} (Dingle's first rule). Consequently, the asymptotic behaviour of $U(a,z)$ in the sector $|\arg z|<\frac{\pi}{2}$ is described by the expansion \eqref{eq2}, while in the sector $\frac{\pi}{2} < \arg z < \pi$ we have the compound expansion
\begin{equation}\label{eq4}
U(a,z) \sim \e^{ - \frac{1}{4}z^2 } z^{ - a - \frac{1}{2}} \sum\limits_{n = 0}^\infty ( - 1)^n \frac{\left( \frac{1}{2} + a \right)_{2n} }{n!(2z^2)^n} + S \sqrt {\frac{2}{\pi}} \e^{\frac{1}{4}z^2} z^{a - \frac{1}{2}} \sum\limits_{n = 0}^\infty \frac{\left( \frac{1}{2} - a \right)_{2n} }{n!(2z^2 )^n}.
\end{equation}
The magnitude of the constant $S$, which may depend on $a$ but is independent of $z$, is unknown at this stage. Its value can be determined by continuing the analysis for the next phase sector $\pi < \arg z < \frac{3\pi}{2}$ and alluding to the fact that $U(a,z)$ is real when $z$ is real (cf. \cite[pp. 9--10]{Dingle1973}), or by examining the asymptotic behaviour of the high-order coefficients in \eqref{eq2} \cite{ABOD1994} (see also \cite[\S16.511]{WW1927} for yet another derivation using power series). With either of these methods, it is found that
\[
S  = \frac{\pi \im}{\Gamma \left( \frac{1}{2} + a \right)}\e^{ - \pi \im a}.
\]
If $S$ is regarded not as a constant but as a function of $\theta=\arg z$, the expansion \eqref{eq4} applies in the larger sector $-\frac{\pi}{2} < \arg z < \pi$ provided we set
\begin{equation}\label{eq5}
S=S(\theta)=\begin{cases} 0 & \text{if }\; |\theta|<\frac{\pi}{2}, \\[0.25em] \dfrac{\pi \im}{2\Gamma \left( \frac{1}{2} + a \right)}\e^{ - \pi \im a} & \text{if }\; \theta = \frac{\pi}{2}, \\[0.8em] \dfrac{\pi \im}{\Gamma \left( \frac{1}{2} + a \right)}\e^{ - \pi \im a} & \text{if }\; \frac{\pi}{2} < \theta < \pi. \end{cases}
\end{equation}
The $S$ is called the Stokes multiplier. The value of $S$ on the Stokes line $\theta = \frac{\pi}{2}$ was inferred from Dingle's ``final main rule", namely that half the discontinuity in form occurs on reaching the Stokes line, and half on leaving it the other side. In Dingle's terminology, the combination of \eqref{eq4} and \eqref{eq5} constitutes the complete asymptotic expansion of $U(a,z)$ in the sector $-\frac{\pi}{2}<\arg z <\pi$. A similar analysis may be carried out for the conjugate sector $-\pi<\arg z <-\frac{\pi}{2}$ and for the expansion \eqref{eq3} of $V(a,z)$.

This is the conventional view of the Stokes phenomenon, where the multiplier of the exponentially small contribution changes abruptly on crossing a Stokes line. In his insightful paper \cite{Berry1989}, M.~V.~Berry adopted the view that the Stokes multiplier should be regarded as a continuous function of $\arg z$, rather than a discontinuous one. In our example above, this is achieved by terminating the two series in \eqref{eq4}: the first series is truncated optimally (that is, just before its least term) after $N$ terms, where $N= \lfloor \frac{1}{2}\left| z \right|^2 -a \rfloor$, and the second series is truncated at its first term. A modified Stokes multiplier $\mathcal{S}_N$ for the Stokes line $\arg z =\frac{\pi}{2}$ is then defined exactly by the identity
\[
U(a,z) = \e^{ - \frac{1}{4}z^2 } z^{ - a - \frac{1}{2}} \sum\limits_{n = 0}^{N-1} ( - 1)^n \frac{\left( \frac{1}{2} + a \right)_{2n} }{n!(2z^2)^n} +  \mathcal{S}_N \frac{\sqrt{2\pi}}{\Gamma \left( \frac{1}{2} + a\right)}\e^{\frac{1}{4}z^2 } (z\e^{ - \pi \im} )^{a - \frac{1}{2}} .
\]
The $\mathcal{S}_N=\mathcal{S}_N(\theta)$ is a piecewise continuous function of $|z|$, and a continuous function of $\theta=\arg z$. Berry demonstrated that if $|z|$ is large and held fixed, then $\mathcal{S}_N$ changes rapidly but smoothly from approximately $0$ to approximately $1$ as $\theta$ increases continuously from values somewhat below $\frac{\pi}{2}$ to values somewhat above $\frac{\pi}{2}$. This is consistent with \eqref{eq4} and \eqref{eq5} above. In other words, Berry showed that the contribution of the exponentially small term in the neighbourhood of the Stokes line $\arg z =\frac{\pi}{2}$ is introduced smoothly, and not as an abrupt jump as was commonly supposed. Moreover, Berry demonstrated that for a wide class of asymptotic expansions, this smooth transition of the Stokes multiplier obeys a universal form given approximately by an error function whose argument is an appropriate variable describing the transition across a Stokes line. In our example, Berry's approximation takes the form
\[
\mathcal{S}_N \approx \frac{1}{2} + \frac{1}{2}\erf\left(\frac{1}{2}\frac{\Im(-z^2)}{\sqrt{\Re(-z^2)}}\right) = \frac{1}{2}\erfc\left(\frac{1}{2}\frac{\Im (z^2)}{\sqrt{\Re(-z^2)}}\right),
\]
where $\erf$ and $\erfc$ are the error function and the complementary error function, respectively. Thus, $\mathcal{S}_N$ increases smoothly from approximately $0$ when $\Im(z^2)/\sqrt{\Re(-z^2)}$ is large and negative (i.e., $|z|$ is large and $\arg z$ is just below $\frac{\pi}{2}$) to approximately $1$ when $\Im(z^2)/\sqrt{\Re(-z^2)}$ is large and positive (i.e., $|z|$ is large and $\arg z$ is just above $\frac{\pi}{2}$). On the Stokes line $\arg z =\frac{\pi}{2}$, $\Im(z^2)/\sqrt{\Re(-z^2)}=0$ and therefore $\mathcal{S}_N \approx \frac{1}{2}$, in accordance with Dingle's final main rule. 

Berry's approximation for the Stokes multiplier $\mathcal{S}_N$ is derived by application of Dingle's theory of terminants. Dingle observed that, although the early terms of an asymptotic series can rapidly get extremely complicated, the high-order coefficients of a wide class of asymptotic expansions display a universal and simple form. (This phenomenon may be seen as a consequence of G.~M.~Darboux's approximation of high derivatives in terms of factorials \cite{Darboux1878}.) By a clever application of \'E.~Borel's summation method to the asymptotic form of the high orders, Dingle transformed the divergent tail of an asymptotic expansion into a new series in terms of certain integrals that he called ``basic terminants". The resulting series, Dingle's ``interpretation" of the asymptotic expansion, can yield exceedingly accurate approximations and automatically incorporates the Stokes phenomenon. He envisaged in his book that ``these terminant expansions can themselves be closed with new terminants; and so on stage after stage". Such re-summations are now known as hyperasymptotics and have a well-established theory \cite{BHNOD2018,BerryHowls1990,BerryHowls1991,ABOD1998}.

In this paper, we revisit two familiar, closely related asymptotic expansions, namely those of the gamma function $\Gamma(z)$ and its reciprocal. These asymptotic expansions exhibit a significantly different behaviour than that discussed above: in the neighbourhood of the rays $\arg z =\pm\frac{\pi}{2}$, not one but infinitely many exponentially small contributions appear, each associated with its own Stokes multiplier. Furthermore, these contributions may no longer obey Dingle's rule: the value of their associated Stokes multipliers can differ from $\frac{1}{2}$ on the Stokes line and may be non-zero only on the line itself. This unusual behaviour of the multipliers is a result of a sequence of higher-order Stokes phenomena. We will demonstrate that if the asymptotic expansions are interpreted as a series of optimally truncated hyperasymptotic re-expansions, the multiplier of each subdominant exponential experiences a smooth, but rapid, change in value described by a multivariate polynomial in complementary error functions. This result is a natural extension of Berry's theory to a particular, yet important, case of the higher-order Stokes phenomenon. Our proof relies crucially on a non-trivial identity between hyperterminants, which was originally conjectured by C.~J.~Howls.

The remaining part of the paper is organised as follows. In Section \ref{section2}, we derive the complete asymptotic expansions of the gamma function and its reciprocal and determine the Stokes multipliers in the discontinuous sense. The smooth interpretation of the (higher-order) Stokes phenomena via hyperasymptotics is described in Section \ref{section3}. In Section \ref{section4}, we settle Howls' conjecture on the hyperterminants. Section \ref{section5} contains the main result of the paper: the uniform asymptotic approximations for the hyperterminants in terms of complementary error functions. The paper concludes with a short discussion in Section \ref{section6}.

\section{The Stokes phenomenon associated with the gamma function and~its~reciprocal}\label{section2}

A well-known representation of the logarithm of the gamma function is given by
\begin{equation}\label{eq7}
\log \Gamma (z) = \left(z - \frac{1}{2}\right)\log z - z + \frac{1}{2}\log (2\pi ) + g(z),
\end{equation}
where $g(z)$ is an analytic function of $z$ on the principal sheet $|\arg z|<\pi$ of the complex logarithm. Several explicit representations for $g(z)$ are available in the literature (see, e.g., \cite[\S2.1.1]{Paris2001}). It is well known that as $z\to \infty$ in $|\arg z|<\frac{\pi}{2}$, $g(z)$ possesses the asymptotic expansion
\begin{equation}\label{eq6}
g(z) \sim \sum\limits_{n = 1}^\infty\frac{B_{2n}}{2n(2n-1)z^{2n-1}} =\frac{1}{12z}-\frac{1}{360z^3}+\frac{1}{1260z^5} - \ldots
\end{equation}
where $B_n$ are the Bernoulli numbers (see \cite[\href{https://dlmf.nist.gov/5.11.E1}{Eq. 5.11.1}]{NIST:DLMF}). Since successive even-order Bernoulli numbers have opposite signs, all terms in the asymptotic expansion \eqref{eq6} have the same phase when $\arg z = \pm \frac{\pi}{2}$, so these are Stokes lines for $\log \Gamma (z)$.

To obtain the expansion in the sectors $\frac{\pi}{2} < |\arg z| < \pi$, we can use the reflection formula in the form
\[
\Gamma ( - z) =  - \frac{\pi }{z\sin (\pi z)}\frac{1}{\Gamma (z)}.
\]
Substitution into \eqref{eq7} shows that $g(z)$ must satisfy the functional relation
\begin{equation}\label{eq8}
g(z) =  - g(z\e^{ \mp \pi \im} ) - \log (1 - \e^{ \pm 2\pi \im z} ) =  - g(z\e^{ \mp \pi \im} ) + \sum\limits_{k = 1}^\infty \frac{\e^{ \pm 2\pi \im k z}}{k} .
\end{equation}
The upper or lower sign is taken according as $z$ is in the upper or lower left half-plane. Thus the analytic continuation of $g(z)$ has generated an infinite string of exponential terms. These terms are subdominant compared to the series \eqref{eq6} in the upper and lower half-planes and are maximally subdominant on the Stokes lines $\arg z = \pm \frac{\pi}{2}$, respectively. As $z$ approaches the negative real axis, however, these exponentials steadily grow in magnitude and add to generate the poles of $\Gamma(z)$.

The combination of \eqref{eq7}, \eqref{eq6} and \eqref{eq8}, and Dingle's final main rule yields the complete asymptotic expansion of $\log \Gamma (z)$ in the sector $|\arg z|<\pi$:
\begin{equation}\label{eq11}
\log \Gamma (z) \sim \left(z - \frac{1}{2}\right)\log z - z + \frac{1}{2}\log (2\pi ) + \sum\limits_{n = 1}^\infty\frac{B_{2n}}{2n(2n-1)z^{2n-1}}+\sum\limits_{k = 1}^\infty S^{(k)} \frac{\e^{ \pm 2\pi \im k z}}{k},
\end{equation}
where the Stokes multipliers $S^{(k)}$ are
\[
S^{(k)}=S^{(k)}(\theta)=\begin{cases} 0 & \text{if }\; |\theta|<\frac{\pi}{2}, \\[0.25em] \frac{1}{2} & \text{if }\; \theta = \pm \frac{\pi}{2}, \\[0.25em] 1 & \text{if }\; \frac{\pi}{2} < |\theta| < \pi, \end{cases}
\]
with $\theta=\arg z$. R.~B.~Paris and A.~D.~Wood \cite{PW1992} gave an alternative derivation of the expansions on the Stokes lines by invoking the specific form of $\left|\Gamma (\im y)\right|$, $y>0$.

It is expected that the small exponentials emerge from the remainder of the optimally truncated divergent series \eqref{eq6} in a smooth manner, according to the error function law, as $z$ crosses the rays $\arg z = \pm \frac{\pi}{2}$. The demonstration of the smooth transition of the leading subdominant exponentials ($k = 1$ in \eqref{eq11}) was first given by Paris and Wood \cite{PW1992}. Berry \cite{Berry1991} proved by a different method, involving a sequence of increasingly delicate subtractions of optimally-truncated asymptotic series, that all the smaller exponentials appear in this way. His result was later reproduced by Paris and D.~Kaminski using Mellin--Barnes integrals \cite[\S6.4.2]{Paris2001} (see also \cite{Paris1993}).

Our primary interest in this paper is concerned with the asymptotics, and the related Stokes phenomena, of the gamma function itself rather than its logarithm. With regard to this case, it is convenient to introduce the scaled gamma function $\Gamma^\ast (z)$ defined by
\[
\Gamma^\ast (z) = \frac{\Gamma (z)}{\sqrt{2\pi } z^{z-1/2} \e^{-z}} = \e^{g(z)} .
\]
The function $\Gamma^\ast (z)$ can be analytically continued as a meromorphic function on the Riemann surface $\widehat{\mathbb C}$ associated with the complex logarithm. Exponentiation of \eqref{eq6} produces the familiar asymptotic expansion
\begin{equation}\label{eq9}
\Gamma^\ast (z) \sim \sum\limits_{n = 0}^\infty(-1)^n\frac{\gamma_n}{z^n} = 1 + \frac{1}{12z} + \frac{1}{288z^2} - \frac{139}{51840z^3} - \ldots,
\end{equation}
as $z\to \infty$ in the sector $|\arg z|<\frac{\pi}{2}$, where $\gamma_n$ are the so-called Stirling coefficients (see, e.g., \cite[\S2.1.2]{Paris2001}). The corresponding expansion for the reciprocal function is given by
\begin{equation}\label{eq10}
\frac{1}{\Gamma^\ast(z)} \sim \sum\limits_{n = 0}^\infty \frac{\gamma_n}{z^n} = 1 - \frac{1}{12z} + \frac{1}{288z^2} + \frac{139}{51840z^3} - \ldots,
\end{equation}
as $z\to \infty$ in $|\arg z|<\frac{\pi}{2}$. Observe that \eqref{eq10} involves the same set of coefficients as \eqref{eq9} but with different signs of the coefficients with odd index. This may be explained by noting that the series \eqref{eq6} has odd powers only \cite[p. 63]{Temme1996}. We remark that the reciprocal function is analytic on $\widehat{\mathbb C}$.

The Stokes phenomena associated with the gamma function and its reciprocal are directly inherited from that of the function $\log \Gamma (z)$. In particular, the positive and negative imaginary axes are Stokes lines. There are however significant differences which only become apparent when studying complete asymptotic expansions and the values of Stokes multipliers on the rays $\arg z = \pm \frac{\pi}{2}$. Indeed, exponentiating \eqref{eq11} and using the standard Maclaurin series of the functions $\exp ( \mp \log (1 - w)) = (1 - w)^{ \mp 1}$ and $\exp\big( \mp \frac{1}{2}\log (1 - w)\big) = (1 - w)^{ \mp 1/2}$ yields the complete asymptotic expansions of $\Gamma^\ast (z)$ and its reciprocal in the sector $|\arg z|<\pi$:
\begin{equation}\label{eq12}
\Gamma^\ast (z) \sim \sum\limits_{n = 0}^\infty (-1)^n\frac{\gamma_n}{z^n} +\sum\limits_{k=1}^\infty\mathbf{S}^{(k)} \e^{ \pm 2\pi \im kz} \sum\limits_{n = 0}^\infty (-1)^n \frac{\gamma _n }{z^n}
\end{equation}
and
\begin{equation}\label{eq13}
\frac{1}{\Gamma^\ast(z)} \sim \sum\limits_{n = 0}^\infty \frac{\gamma_n}{z^n} - \sum\limits_{k = 1}^\infty \widetilde{\mathbf{S}}^{(k)} \e^{ \pm 2\pi \im kz} \sum\limits_{n = 0}^\infty \frac{\gamma_n}{z^n},
\end{equation}
where the Stokes multipliers $\mathbf{S}^{(k)}$ and $\widetilde{\mathbf{S}}^{(k)}$ are
\begin{equation}\label{eq23}
\mathbf{S}^{(k)}=\mathbf{S}^{(k)}(\theta)=\begin{cases} 0 & \text{if }\; |\theta|<\frac{\pi}{2}, \\[0.25em] \frac{1}{k!}\left(\frac{1}{2}\right)_k & \text{if }\; \theta = \pm \frac{\pi}{2}, \\[0.25em] 1 & \text{if }\; \frac{\pi}{2} < |\theta| < \pi, \end{cases}
\end{equation}
for $k\geq 1$, and
\begin{equation}\label{eq24}
\widetilde{\mathbf{S}}^{(1)}=\widetilde{\mathbf{S}}^{(1)}(\theta)=\begin{cases} 0 & \text{if }\; |\theta|<\frac{\pi}{2}, \\[0.25em] \frac{1}{2} & \text{if }\; \theta = \pm \frac{\pi}{2}, \\[0.25em] 1 & \text{if }\; \frac{\pi}{2} < |\theta| < \pi, \end{cases}
\quad
\widetilde{\mathbf{S}}^{(k)}=\widetilde{\mathbf{S}}^{(k)}(\theta)=\begin{cases} 0 & \text{if }\; |\theta|<\frac{\pi}{2}, \\[0.25em] \frac{-1}{k!}\left(-\frac{1}{2}\right)_k & \text{if }\; \theta = \pm \frac{\pi}{2}, \\[0.25em] 0 & \text{if }\; \frac{\pi}{2} < |\theta| < \pi, \end{cases}
\end{equation}
for $k\geq 2$, and with $\theta=\arg z$ (cf. \cite[\S3.2]{Nemes2015}). The upper or lower signs are taken in \eqref{eq12} and \eqref{eq13} according as $z$ is in the upper or lower left half-plane. Expansions of type \eqref{eq12} and \eqref{eq13} are commonly referred to as transseries expansions. It is seen from \eqref{eq12} that when either the positive or the negative imaginary axis is crossed the Stokes phenomenon switches on an infinite copy of the expansion \eqref{eq9}, pre-factored by increasingly subdominant exponentially small terms. But the situation is different from the case of $\log\Gamma(z)$, in that the Stokes multipliers of the sub-subdominant contributions (corresponding to $k\geq 2$) do not follow Dingle's final main rule. The complete asymptotic expansion \eqref{eq13} of the reciprocal function exhibits an even more striking behaviour: an infinite number of exponentially small contributions of decreasing magnitude appear on each of the Stokes lines $\arg z = \pm \frac{\pi}{2}$ from which only the leading one ($k=1$ in \eqref{eq13}) survives on the respective other side. (The disappearance of the exponentials
is essential since the reciprocal gamma function is analytic along the negative real axis.) This unconventional behaviour of the Stokes multipliers is a consequence of a series of higher-order Stokes phenomena taking place across the lines $\arg z = \pm \frac{\pi}{2}$. These phenomena are not present in the expansion \eqref{eq6} of $\log\Gamma(z)$. At a higher-order Stokes phenomenon, a Stokes multiplier itself can change value, and in particular the potential for a Stokes phenomenon to occur is changed (for a general account see \cite{Howls2004}). This is indeed reflected in the expressions for $\mathbf{S}^{(k)}$ and $\widetilde{\mathbf{S}}^{(k)}$, $k\geq 2$.

The smooth interpretation of the Stokes phenomenon associated with the leading subdominant exponentials $\e^{ \pm 2\pi \im z}$ in \eqref{eq12} and \eqref{eq13} was studied by W.~G.~C.~Boyd \cite{Boyd1994} and the present author \cite{Nemes2015}. They verified that these small exponentials are born from the remainders of the optimally truncated expansions \eqref{eq9} and \eqref{eq10}, and obey Berry's error function smoothing law. As Berry \cite{Berry1991} remarks, the smoothings of the smaller exponentials which are apparent with $\log\Gamma(z)$ are obscured if one considers $\Gamma(z)$ instead, and their recovery might require hyperasymptotics. In the subsequent section we shall show this indeed to be the case: the smaller exponentials are born from the remainder terms of successive hyperasymptotic re-expansions. However, there are two major differences compared with $\log \Gamma(z)$. First, the switching-on of successively smaller exponential contributions can only be revealed one at a time, and second, the appearance of the sub-subdominant expansions is no longer described by a single error function. (We remark that an example of the non-universality of the error function smoothing law was also found by S.~J.~Chapman \cite{Chapman1996} in the solution of an inhomogeneous delay equation.)

\section{Hyperasymptotics and Stokes smoothings}\label{section3}

In this section, we discuss the hyperasymptotics process and the smoothings of the (higher-order) Stokes phenomena for the gamma function and its reciprocal. Briefly, hyperasymptotics is the systematic re-expansion of successive remainder terms in an asymptotic expansion that automatically incorporates the Stokes phenomenon. Each step of the process reduces the size of the error term by an exponentially small factor.

For any positive integer $N$, we denote by $R_N (z)$ and $\widetilde R_N (z)$ the remainders of the series \eqref{eq9} and \eqref{eq10} truncated to $N$ terms:
\begin{equation}\label{eq16}
\Gamma^\ast (z) = \sum\limits_{n = 0}^{N - 1} ( - 1)^n \frac{\gamma _n }{z^n }  + R_N (z)\quad \text{ and } \quad \frac{1}{\Gamma^\ast (z)} = \sum\limits_{n = 0}^{N - 1} \frac{\gamma _n }{z^n }  + \widetilde R_N (z).
\end{equation}
Note that for each $N$, $R_N (z)$ and $\widetilde R_N (z)$ are analytic functions of $z$ in the sector $|\arg z|<\pi$. The basis of hyperasymptotics will be the exact resurgence formulae (\cite[p. 616]{Boyd1994}, \cite[p. 576]{Nemes2015})
\begin{equation}\label{eq14}
R_N (z) = \frac{1}{2\pi \im}\frac{1}{z^N}\int_0^{[\pi/2^-]} \frac{\e^{2\pi \im t} t^{N - 1} }{1 - t/z}\Gamma^\ast (t)\d t  - \frac{1}{2\pi \im}\frac{1}{z^N}\int_0^{[-\pi/2^+]} \frac{\e^{ - 2\pi \im t} t^{N - 1}  }{1 - t/z}\Gamma^\ast (t)\d t 
\end{equation}
and
\begin{equation}\label{eq15}
\widetilde R_N (z) = - \frac{1}{2\pi \im}\frac{1}{z^N}\int_0^{[\pi /2^-]} \frac{\e^{2\pi \im t} t^{N - 1} }{1 - t/z}\Gamma^\ast(t\e^{-\pi \im} )\d t  + \frac{1}{2\pi \im}\frac{1}{z^N}\int_0^{[ - \pi /2^ +  ]} \frac{\e^{ - 2\pi \im t} t^{N - 1} }{1 - t/z}\Gamma^\ast (t\e^{\pi \im} )\d t .
\end{equation}
Here and throughout the paper we use the notation $\int_0^{\left[ \eta  \right]} = \int_0^{ + \infty \e^{\im\eta } }$. These formulae are valid under the assumption that $|\arg z|<\frac{\pi}{2}$, and they both incorporate the Stokes phenomenon. For example as $\arg z$ increases beyond $\frac{\pi}{2}$, the pole at $t=|z|\e^{\pi \im/2}$ in the first integral of \eqref{eq15} is entrapped: consequently for $\frac{\pi}{2}<\arg z<\frac{3\pi}{2}$, the term $- \e^{2\pi \im z} \Gamma^\ast (z\e^{ - \pi \im} )$ has to be added to the right-hand side of \eqref{eq15}. Expanding $\Gamma^\ast (z\e^{ - \pi \im} )$ via \eqref{eq10} then reproduces \eqref{eq13}.

We now define certain multiple integrals, the so-called ``hyperterminants", which form the basis of the hyperasymptotics scheme. These hyperterminants are multidimensional generalisations of Dingle's basic terminants. For $m$ a non-negative integer, we define
\begin{equation}\label{eq31}
F^{(0)}(z)=1,\qquad F^{(1)} \!\left( z;\! \begin{array}{c} N_1\!\! \\ \sigma_1\!\! \\ \end{array} \right) = \int_0^{[\pi-\arg \sigma_1]} \frac{\e^{\sigma_1 t_1} t_1^{N_1-1}}{z - t_1}\d t_1,
\end{equation}
\begin{equation}\label{eq34}
F^{(m)}\!\left( z;\! \begin{array}{c}
   N_1 ,\,\ldots,\!\!\!\!  \\ 
   \,\sigma _1 ,\,\,\ldots,\!\!\!\!  \\
\end{array}\begin{array}{c}
   N_m \!\!  \\
   \sigma _m \!\!  \\
\end{array} \right) = \int_0^{[\pi - \arg \sigma _1]} \frac{\e^{\sigma _1 t_1 } t_1^{N_1  - 1} }{z - t_1 } \prod_{k=2}^m \int_0^{ [\pi-\arg \sigma _k]} \frac{\e^{\sigma _k t_k } t_k^{N_k  - 1} }{t_{k - 1}  - t_k }\d t_k \d t_1,
\end{equation}
for an arbitrary set of complex numbers $N_1,\ldots,N_m$ such that $\Re(N_k)>1$ with $k=1,\ldots,m$, and for an arbitrary set of elements $\sigma_1,\ldots,\sigma_m$ of $\widehat{\mathbb C}$ such that $\arg \sigma_k \neq \arg \sigma_{k+1} \pmod{2\pi}$ with $k=1,\ldots,m$. These multiple integrals converge provided $|\arg(\sigma_1 z)|<\pi$, and can be extended by analytic continuation to other values of $\arg z$. The $F^{(m)}$ is termed an $m$th-level hyperterminant. We will refer to the quantities $\sigma_k$ as ``singulants", a term originating from the work of Dingle. We remark that
\begin{equation}\label{eq33}
F^{(1)} \!\left( z;\! \begin{array}{c} N\! \\ \sigma\! \\ \end{array} \right) =\e^{\pi \im N} \e^{\sigma z} z^{N - 1} \Gamma (N)\Gamma (1 - N,\sigma z),
\end{equation}
where $\Gamma(a,w)$ is the incomplete gamma function. In the case that $\arg \sigma_k = \arg \sigma_{k+1} \pmod{2\pi}$ for some $k$, we have to specify whether the $t_k$-contour is indented to the left or right of the pole of $(t_{k-1}-t_k)^{-1}$. We make the choice by defining
\begin{equation}\label{eq27}
F^{(m)}\!\left( z;\! \begin{array}{c}
   N_1 ,\,\ldots,\!\!\!\!  \\ 
   \,\sigma _1 ,\,\,\ldots,\!\!\!\!  \\
\end{array}\begin{array}{c}
   N_m \!\!  \\
   \sigma _m \!\!  \\
\end{array} \right)  = \mathop {\lim }\limits_{\varepsilon  \to 0^+ } F^{(m)} \!\left( z; \!\begin{array}{c}
   N_1 ,\!\!\!\!  \\
   \sigma _1 \e^{(m - 1)\varepsilon \im} ,\!\!\!\!  \\
\end{array}\begin{array}{c}
   N_2 , \quad\quad\quad\,\ldots, \!\!\!\!  \\
   \sigma _2 \e^{(m - 2)\varepsilon \im} ,\, \ldots, \!\!\!\!  \\
\end{array}\begin{array}{c}
   N_m \!\!  \\
   \sigma _m \!\!  \\
\end{array} \right).
\end{equation}
Thus, we require that the pole of $(t_{k-1}-t_k)^{-1}$ is on the left-hand side of the $t_k$-contour.

Throughout the paper, we will frequently use the connection formula
\begin{gather}\label{eq49}
\begin{split}
F^{(m)} \!\left( z\e^{-2\pi \im};\! \begin{array}{c}
   N_1 ,\,\ldots,\!\!\!\!  \\ 
   \,\sigma _1 ,\,\,\ldots,\!\!\!\!  \\
\end{array}\begin{array}{c}
   N_m \!\!  \\
   \sigma _m \!\!  \\
\end{array} \right) = \; & F^{(m)} \!\left( z;\! \begin{array}{c}
   N_1 ,\,\ldots,\!\!\!\!  \\ 
   \,\sigma _1 ,\,\,\ldots,\!\!\!\!  \\
\end{array}\begin{array}{c}
   N_m \!\!  \\
   \sigma _m \!\!  \\
\end{array} \right) \\& - 2\pi \im \e^{\sigma_1 z} z^{N_1-1} F^{(m-1)} \!\left( z;\! \begin{array}{c}
   N_2 ,\,\ldots,\!\!\!\!  \\ 
   \,\sigma _2 ,\,\,\ldots,\!\!\!\!  \\
\end{array}\begin{array}{c}
   N_m \!\!  \\
   \sigma _m \!\!  \\
\end{array} \right),
\end{split}
\end{gather}
valid when $m\geq 1$ \cite[Eq. (2.6)]{ABOD1997}.

The numerical evaluation of the hyperterminants is discussed in the paper by T.~B.~Bennett et al. \cite[Theorem A.1]{BHNOD2018} (see \cite{ABOD1997} for basic properties).

\subsection{Level 1 hyperasymptotics} We now derive the Level 1 hyperasymptotic expansions. In the integral representations \eqref{eq14} and \eqref{eq15} for the remainders $R_N(z)$ and $\widetilde R_N (z)$ we substitute \eqref{eq16} (with $M$ in place of $N$) into $\Gamma^\ast (t)$ and $\Gamma^\ast(t\e^{\mp\pi \im} )$. We obtain, after term-by-term integration, the re-expansions
\begin{gather}\label{eq17}
\begin{split}
R_N (z) = \; & \frac{1}{2\pi \im}\frac{1}{z^{N - 1} }\sum\limits_{m = 0}^{M - 1} ( - 1)^m \gamma _m F^{(1)} \!\left( z;\!\begin{array}{c}
   N - m\! \\
   2\pi \e^{\frac{\pi }{2}\im}\! \\
\end{array} \right)
\\ & - \frac{1}{2\pi \im}\frac{1}{z^{N - 1}}\sum\limits_{m = 0}^{M - 1} ( - 1)^m \gamma _m F^{(1)}\! \left( z;\!\begin{array}{c}
   N - m\! \\
   2\pi \e^{ - \frac{\pi }{2}\im}\! \\
\end{array} \right)  + R_{N,M} (z)
\end{split}
\end{gather}
and
\begin{gather}\label{eq18}
\begin{split}
\widetilde R_N (z) = & - \frac{1}{2\pi \im}\frac{1}{z^{N - 1} }\sum\limits_{m = 0}^{M - 1}\gamma _m F^{(1)} \!\left( z;\!\begin{array}{c}
   N - m \! \\
   2\pi \e^{\frac{\pi }{2}\im} \!  \\
\end{array} \right)
\\ & + \frac{1}{2\pi \im}\frac{1}{z^{N - 1}}\sum\limits_{m = 0}^{M - 1} \gamma _m F^{(1)}\! \left( z;\!\begin{array}{c}
   N - m \! \\
   2\pi \e^{ - \frac{\pi }{2}\im} \! \\
\end{array} \right)  + \widetilde R_{N,M} (z),
\end{split}
\end{gather}
with
\begin{equation}\label{eq25} 
R_{N,M} (z) = \frac{1}{2\pi \im}\frac{1}{z^N}\int_0^{[\pi/2^-]} \frac{\e^{2\pi \im t} t^{N - 1} }{1 - t/z}R_M(t)\d t  - \frac{1}{2\pi \im}\frac{1}{z^N}\int_0^{[-\pi/2^+]} \frac{\e^{ - 2\pi \im t} t^{N - 1}  }{1 - t/z}R_M(t)\d t
\end{equation}
and
\[ 
\widetilde R_{N,M} (z) = - \frac{1}{2\pi \im}\frac{1}{z^N}\int_0^{[\pi /2^-]} \frac{\e^{2\pi \im t} t^{N - 1} }{1 - t/z}R_M(t\e^{-\pi \im} )\d t  + \frac{1}{2\pi \im}\frac{1}{z^N}\int_0^{[ - \pi /2^ +  ]} \frac{\e^{ - 2\pi \im t} t^{N - 1} }{1 - t/z}R_M(t\e^{\pi \im} )\d t ,
\]
provided $M<N$. The remainders $R_{N,M} (z)$ and $\widetilde R_{N,M} (z)$ are defined initially for $|\arg z|<\frac{\pi}{2}$, but can be extended by a standard analytic continuation argument to $|\arg z|<\pi$.

For the sake of completeness we shall now reproduce the results of \cite{Boyd1994,Nemes2015}. We will show that the first few terms of the series in \eqref{eq12} and \eqref{eq13} with the exponential pre-factor $\e^{2\pi \im z}$ are switched on smoothly across the Stokes line $\arg z =\frac{\pi}{2}$ according to Berry's universal error function law. For this we examine the behaviour of the remainder terms $R_N (z)$ and $\widetilde R_N (z)$ in the vicinity of the ray $\arg z =\frac{\pi}{2}$. Assume that the asymptotic expansions \eqref{eq9} and \eqref{eq10} are truncated so that the optimal value for $N$, $N=\left\lfloor 2\pi |z|\right\rfloor$ is chosen in \eqref{eq16} (cf. \cite[p. 621]{Boyd1994}). Under this assumption it can be shown that for each fixed $M$,
\begin{equation}\label{eq22}
R_{N,M} (z),\; \widetilde R_{N,M} (z) =\begin{cases} \O(\left|z\right|^{-M}\e^{-2\pi\left|z\right|}) & \text{if }\; |\arg z|\leq\frac{\pi}{2}, \\[0.5em] \O(\left|z\right|^{-M}\e^{-2\pi\Im(z)}) & \text{if }\; \frac{\pi}{2} < \arg z \leq \pi-\delta < \pi, \end{cases}
\end{equation}
as $z\to \infty$, uniformly with respect to $\arg z$ \cite[Theorem 1.4]{Nemes2015}. The behaviour of $R_N (z)$ and $\widetilde R_N (z)$ as $z\to \infty$ in the neighbourhood of the line $\arg z =\frac{\pi}{2}$ can now be deduced from \eqref{eq17}, \eqref{eq18}, and the asymptotic properties of the function $F^{(1)}$. With $\varphi= \arg(\sigma z)$, $N\sim |\sigma z|$ and fixed $n$, F.~W.~J.~Olver \cite{Olver1991} showed that $F^{(1)}$ possesses the following asymptotic behaviour for large $|\sigma z|$:
\begin{equation}\label{eq52}
\frac{\e^{ - \sigma z} }{2\pi \im z^{N - 1 - n} }F^{(1)} \!\left( z;\!\begin{array}{c}
   N - n \! \\
   \sigma  \! \\
\end{array} \right) = - \frac{\im \e^{(\pi  - \varphi )\im N} }{1 + \e^{ -\varphi\im } }\frac{\e^{ - \sigma z - \left| \sigma z\right|} }{\sqrt {2\pi \left| \sigma z\right|} } ( 1 + \O(|\sigma z|^{-1}))
\end{equation}
when $|\varphi| \le \pi-\delta<\pi$, and
\begin{equation}\label{eq40}
\frac{\e^{ - \sigma z} }{2\pi \im z^{N - 1 - n} }F^{(1)} \!\left( z;\!\begin{array}{c}
   N - n \! \\
   \sigma \! \\
\end{array} \right) = \frac{1}{2}\erfc\Big(  c(\varphi ) \sqrt{\tfrac{1}{2}| \sigma z |}   \Big) + \mathcal{O}\bigg( \bigg| \frac{\e^{ - \frac{1}{2}| \sigma z |c^2 (\varphi )} }{\sqrt { \left| \sigma z\right|}} \bigg| \bigg),
\end{equation}
when $-\pi<-\pi+\delta \le \varphi \le 3\pi-\delta<3\pi$, with a conjugate behaviour in the sector $-3\pi<-3\pi+\delta \le \varphi \le \pi-\delta<\pi$. The quantity $c(\varphi )$ is defined by
\begin{equation}\label{eq48}
\frac{1}{2}c^2 (\varphi ) = 1 + \im(\varphi  - \pi ) - \e^{\im (\varphi  - \pi )},
\end{equation}
and corresponds to the branch of $c(\varphi )$ whose behaviour is
\begin{equation}\label{eq19}
c(\varphi) = -(\varphi-\pi) - \frac{\im}{6}(\varphi-\pi)^2 + \frac{1}{36}(\varphi-\pi)^3 + \ldots 
\end{equation}
near $\varphi=\pi$. Graphs of $\frac{1}{2}c^2 (\varphi )$ and $c(\varphi)$ are given in the paper by Olver \cite[Figs. 3 and 4]{Olver1991}.

We now observe that the main contributions to $R_N (z)$ and $\widetilde R_N (z)$ arise from the first sums in \eqref{eq17} and \eqref{eq18}, respectively. Neglecting error terms and retaining only the leading term in the expansion \eqref{eq19}, we find that as $z\to \infty$ within a region centred on the positive imaginary axis of angular width $\O(|z|^{-1/2})$,
\begin{equation}\label{eq20}
R_{N} (z) \sim \e^{2\pi \im z} \sum\limits_{m = 0}^{M-1} (-1)^m \frac{\gamma_m}{z^m}  \times \frac{1}{2}\erfc\left( \left(\tfrac{\pi}{2}-\theta\right)\sqrt{\pi |z|} \right)
\end{equation}
and
\begin{equation}\label{eq21}
\widetilde R_{N} (z) \sim  - \e^{2\pi \im z} \sum\limits_{m = 0}^{M-1} \frac{\gamma_m}{z^m}  \times \frac{1}{2}\erfc\left( \left(\tfrac{\pi}{2}-\theta\right)\sqrt{\pi |z|} \right)
\end{equation}
with $\theta =\arg z$, provided $M\geq 1$ is fixed. The effect of the complementary error function in \eqref{eq20} and \eqref{eq21} is to switch on the leading terms of the series pre-factored by the exponential $\e^{2\pi \im z}$ in \eqref{eq12} and \eqref{eq13} in the manner described by Berry as $\theta$ increases through $\frac{\pi}{2}$. Note that in Berry's original formulation the argument of the complementary error function is replaced by the approximate quantity
\[
\left(\tfrac{\pi}{2}-\theta\right)\sqrt{\pi |z|}  \approx \frac{\Im (2\pi \im z)}{\sqrt {2\Re (-2\pi \im z)}}.
\]
A similar result may be obtained for the Stokes line $\arg z =-\frac{\pi}{2}$, where the dominant contributions to $R_N (z)$ and $\widetilde R_N (z)$ come from the second sums in \eqref{eq17} and \eqref{eq18}.

It is clear from the estimates \eqref{eq22} that the sub-subdominant contributions (corresponding to $k \ge 2$ in \eqref{eq12} and \eqref{eq13}) cannot be detected by truncating the asymptotic expansions \eqref{eq9} and \eqref{eq10} near their least terms and keeping the number of terms in the re-expansions fixed. We expect that the contributions scaled by the factors $\e^{\pm 4\pi \im z}$ are born from the remainders of the optimally truncated Level 1 hyperasymptotic expansions \eqref{eq17} and \eqref{eq18}. That this is indeed the case may be verified numerically and graphically as follows. Optimal truncation is achieved when $N = \lfloor 4\pi |z|\rfloor$ and $M = \lfloor 2\pi |z|\rfloor$.\footnote{Generally, the optimal truncation indices at Level $k$ form a sequence in which the difference between consecutive terms is about $2\pi|z|$. This is because the difference between the exponents of two consecutive subdominant exponentials in \eqref{eq12} and \eqref{eq13} is precisely $\pm 2\pi \im z$. See \cite[\S4.1]{BHNOD2018} for truncation schemes in general.} The modified Stokes multipliers $\mathcal{S}_{N,M}^{(2)}$ and $\widetilde{\mathcal{S}}_{N,M}^{(2)}$ associated with the small exponential $\e^{4\pi \im z}$ for the Stokes line $\arg z = \frac{\pi}{2}$ are then defined exactly by the identities
\[
R_{N,M} (z) = \mathcal{S}_{N,M}^{(2)} \e^{4\pi \im z} \quad \text{ and } \quad \widetilde R_{N,M} (z) =  - \widetilde{\mathcal{S}}_{N,M}^{(2)} \e^{4\pi \im z} .
\]
Both $\mathcal{S}_{N,M}^{(2)}=\mathcal{S}_{N,M}^{(2)}(\theta)$ and $\widetilde{\mathcal{S}}_{N,M}^{(2)}=\widetilde{\mathcal{S}}_{N,M}^{(2)}(\theta)$ are piecewise continuous functions of $|z|$, and continuous functions of $\theta = \arg z$. The real parts of these modified Stokes multipliers with $|z| = 5$ are plotted against $\arg z$ on Figure \ref{Figure1}. The corresponding imaginary parts are negligible in magnitude. It is seen that $\mathcal{S}_{N,M}^{(2)}$ increases smoothly from approximately $0$ to approximately $1$ as $\theta$ passes continuously from values somewhat below $\frac{\pi}{2}$ to values somewhat above $\frac{\pi}{2}$. On the Stokes line $\theta  =\frac{\pi}{2}$, $\mathcal{S}_{N,M}^{(2)} \approx \frac{3}{8}$, in agreement with the discontinuous treatment \eqref{eq23}. Similarly, if $\theta$ changes from values somewhat below $\frac{\pi}{2}$ to values somewhat above $\frac{\pi}{2}$, then $\widetilde{\mathcal{S}}_{N,M}^{(2)}$ increases from approximately $0$ to approximately $\frac{1}{8}$ (when $\theta =\frac{\pi}{2}$) and then soon decreases to approximately $0$ again. This is consistent with the result \eqref{eq24} above. A similar analysis may be carried out for the other Stokes line $\arg z =-\frac{\pi}{2}$.

\begin{figure}[ht]
    \begin{subfigure}[c]{0.46\textwidth}
			\centering
			\includegraphics[width=\textwidth]{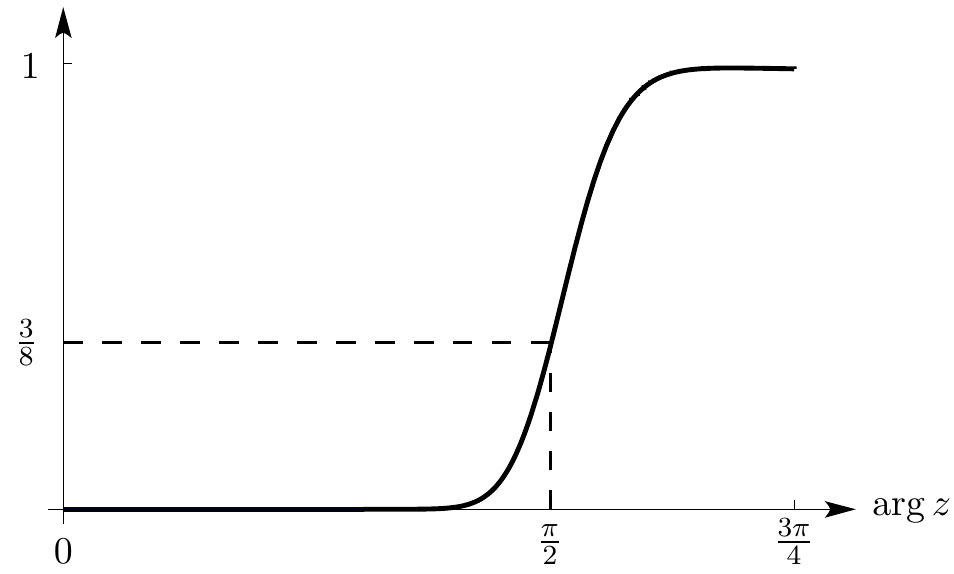}
      \caption{}
    \end{subfigure}
    \hfill
    \begin{subfigure}[c]{0.49\textwidth}
		\centering 
      \includegraphics[width=\textwidth]{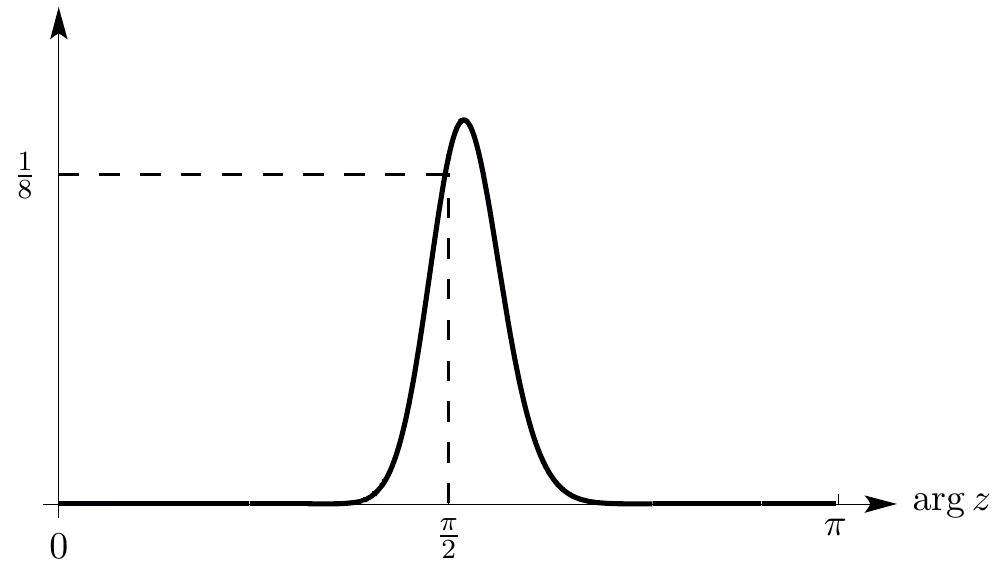}
      \caption{}
    \end{subfigure}
    \caption{(a) The behaviour of $\Re (\mathcal{S}_{N,M}^{(2)})$ with $|z| = 5$, $N = \lfloor 4\pi |z|\rfloor = 62$ and $M = \lfloor 2\pi |z|\rfloor = 31$ in the phase range $0 < \arg z < \frac{3\pi}{4}$. (b) The behaviour of $\Re (\widetilde{\mathcal{S}}_{N,M}^{(2)})$ with $|z| = 5$, $N = \lfloor 4\pi |z|\rfloor = 62$ and $M = \lfloor 2\pi |z|\rfloor = 31$ in the phase range $0 < \arg z < \pi$.}
		\label{Figure1}
  \end{figure}

To obtain a rigorous analytical description of these smooth transitions, it is necessary to go to the second stage of hyperasymptotics.

\subsection{Level 2 hyperasymptotics} The Level 2 hyperasymptotic expansions are derived by re-expanding the Level 1 expansions. Thus, in the integral representation \eqref{eq25} for the remainder $R_{N,M}(z)$ we substitute \eqref{eq17} (with $M$ in place of $N$, and $K$ in place of $M$) into $R_M (t)$. We obtain, using the definitions \eqref{eq34}, \eqref{eq27} and term-by-term integration, the re-expansion
\begin{gather}\label{eq53}
\begin{split}
R_{N,M} (z) = \; & \frac{1}{(2\pi \im)^2 }\frac{1}{z^{N - 1}}\sum\limits_{k = 0}^{K - 1} ( - 1)^k \gamma _k F^{(2)} \!\left( z;\!\begin{array}{c}
    N - M + 1,\!\!\!\!  \\ 
    2\pi \e^{\frac{\pi }{2}\im},\!\!\!\! \\
\end{array}\begin{array}{c}
   M - k  \\ 
   2\pi \e^{\frac{\pi }{2}\im}  \\
\end{array} \right)  
\\ & + \frac{1}{(2\pi \im)^2 }\frac{1}{z^{N - 1}}\sum\limits_{k = 0}^{K - 1} ( - 1)^k \gamma _k F^{(2)} \!\left( z;\!\begin{array}{c}
    N - M + 1,\!\!\!\!  \\ 
    2\pi \e^{-\frac{\pi }{2}\im} ,\!\!\!\!  \\
\end{array}\begin{array}{c}
   M - k  \\ 
   2\pi \e^{-\frac{\pi }{2}\im}  \\
\end{array} \right)
\\ & + \frac{1}{(2\pi \im)^2 }\frac{1}{z^{N - 1} }\sum\limits_{k = 0}^{K - 1} ( - 1)^k \gamma _k F^{(2)} \!\left( z;\!\begin{array}{c}
    N - M + 1,\!\!\!\!  \\ 
    2\pi \e^{\frac{\pi }{2}\im},\!\!\!\!  \\
\end{array}\begin{array}{c}
   M - k  \\ 
   2\pi \e^{-\frac{\pi}{2}\im} \\
\end{array} \right)
\\ & + \frac{1}{(2\pi \im)^2}\frac{1}{z^{N - 1}}\sum\limits_{k = 0}^{K - 1} ( - 1)^k \gamma _k F^{(2)} \!\left( z;\!\begin{array}{c}
    N - M + 1,\!\!\!\!  \\ 
    2\pi \e^{-\frac{\pi}{2}\im},\!\!\!\!  \\
\end{array}\begin{array}{c}
   M - k \\ 
   2\pi \e^{\frac{\pi}{2}\im}  \\
\end{array} \right) 
 + R_{N,M,K} (z),
\end{split}
\end{gather}
with
\[
R_{N,M,K} (z) = \frac{1}{2\pi \im}\frac{1}{z^N}\int_0^{[\pi/2^-]} \frac{\e^{2\pi \im t} t^{N - 1} }{1 - t/z}R_{M,K}(t)\d t  - \frac{1}{2\pi \im}\frac{1}{z^N}\int_0^{[-\pi/2^+]} \frac{\e^{ - 2\pi \im t} t^{N - 1}  }{1 - t/z}R_{M,K}(t)\d t,
\]
provided $K<M<N$. The corresponding expansion for $\widetilde R_{N,M} (z)$ can be derived in a similar manner, with the exception that special attention should be paid when identifying the various $F^{(2)}$ hyperterminants. In the instance that two singulants are identical, the argument of the function $F^{(1)}$ arising from the application of \eqref{eq17} will lie outside the principal domain of definition specified in \eqref{eq31}. In these cases, the application of the connection formula \eqref{eq49}, with $m=1$, becomes necessary. Then we find, after some algebraic computation, that the required re-expansion is
\begin{gather}\label{eq54}
\begin{split}
\widetilde R_{N,M} (z) = & -\frac{1}{2\pi \im}\frac{1}{z^{N - 1}}\sum\limits_{k = 0}^{K - 1} \gamma _k  F^{(1)} \!\left( z;\!\begin{array}{c}
    N - k\! \\ 
    4\pi \e^{\frac{\pi }{2}\im}\! \\
\end{array} \right) \\ & + \frac{1}{(2\pi \im)^2 }\frac{1}{z^{N - 1}}\sum\limits_{k = 0}^{K - 1}  \gamma _k F^{(2)} \!\left( z;\!\begin{array}{c}
     N - M + 1,\!\!\!\!   \\ 
     2\pi \e^{\frac{\pi }{2}\im} ,\!\!\!\! \\
\end{array}\begin{array}{c}
    M - k \\ 
    2\pi \e^{\frac{\pi }{2}\im}  \\
\end{array} \right) 
\\ &  + \frac{1}{2\pi \im}\frac{1}{z^{N - 1}}\sum\limits_{k = 0}^{K - 1} \gamma _k F^{(1)} \!\left( z;\!\begin{array}{c}
    N - k\! \\ 
    4\pi \e^{ - \frac{\pi }{2}\im}\! \\
   \end{array} \right) 
 \\ & + \frac{1}{(2\pi \im)^2}\frac{1}{z^{N - 1}}\sum\limits_{k = 0}^{K - 1} \gamma _k F^{(2)} \!\left( z;\!\begin{array}{c}
     N - M + 1,\!\!\!\!   \\ 
     2\pi \e^{-\frac{\pi}{2}\im} ,\!\!\!\! \\
\end{array}\begin{array}{c}
    M - k \\ 
    2\pi \e^{-\frac{\pi }{2}\im} \\
\end{array} \right)
\\ & + \frac{1}{(2\pi \im)^2}\frac{1}{z^{N - 1}}\sum\limits_{k = 0}^{K - 1} \gamma _k F^{(2)} \!\left( z;\!\begin{array}{c}
     N - M + 1,\!\!\!\! \\ 
    2\pi \e^{\frac{\pi }{2}\im} ,\!\!\!\!  \\
\end{array}\begin{array}{c}
    M - k \\ 
    2\pi \e^{-\frac{\pi }{2}\im} \\
\end{array} \right)
\\ & + \frac{1}{(2\pi \im)^2}\frac{1}{z^{N - 1}}\sum\limits_{k = 0}^{K - 1} \gamma _k F^{(2)} \!\left( z;\!\begin{array}{c}
     N - M + 1,\!\!\!\!   \\ 
     2\pi \e^{-\frac{\pi }{2}\im} ,\!\!\!\! \\
\end{array}\begin{array}{c}
    M - k \\ 
    2\pi \e^{\frac{\pi }{2}\im}  \\
\end{array} \right) 
+ \widetilde R_{N,M,K} (z),
\end{split}
\end{gather}
where
\begin{align*}
\widetilde R_{N,M,K} (z) = & - \frac{1}{2\pi \im}\frac{1}{z^N}\int_0^{[\pi /2^-]} \frac{\e^{2\pi \im t} t^{N - 1} }{1 - t/z}R_{M,K}(t\e^{-\pi \im} )\d t \\ & + \frac{1}{2\pi \im}\frac{1}{z^N}\int_0^{[ - \pi /2^ +  ]} \frac{\e^{ - 2\pi \im t} t^{N - 1} }{1 - t/z}R_{M,K}(t\e^{\pi \im} )\d t,
\end{align*}
with the constraint that $K<M<N$. The remainder terms $R_{N,M,K} (z)$ and $\widetilde R_{N,M,K} (z)$ are defined initially for $|\arg z|<\frac{\pi}{2}$, but can be extended via analytic continuation to $|\arg z|<\pi$.

The presence of the $F^{(2)}$ hyperterminants with identical singulants in \eqref{eq53} and \eqref{eq54} indicates that higher-order Stokes phenomena occur on the rays $\arg z =\pm \frac{\pi}{2}$ (cf. \cite[\S3]{Howls2004}). We shall now provide a smooth interpretation of these phenomena by describing the birth of the subdominant contributions with the exponentially small pre-factor $\e^{4\pi \im z}$ in the neighbourhood of the positive imaginary axis. The procedure is similar to that presented above in connection with the leading subdominant exponential $\e^{2\pi \im z}$. Suppose that the hyperasymptotic expansions \eqref{eq17} and \eqref{eq18} are truncated near their least terms, e.g., the truncation indices are $N = \lfloor 4\pi |z|\rfloor$ and $M = \lfloor 2\pi |z|\rfloor$, respectively. With this assumption, it can be shown that for any fixed $K$,
\[
R_{N,M,K}(z),\; \widetilde R_{N,M,K}(z) =\begin{cases} \O(\left|z\right|^{-K}\e^{-4\pi\left|z\right|}) & \text{if }\; |\arg z|\leq\frac{\pi}{2}, \\[0.5em] \O(\left|z\right|^{-K}\e^{-4\pi\Im(z)}) & \text{if }\; \frac{\pi}{2} < \arg z \leq \pi-\delta < \pi, \end{cases}
\]
as $z\to \infty$, uniformly with respect to $\arg z$ (cf. \cite[\S4.3]{BHNOD2018}). The behaviour of $R_{N,M}(z)$ and $\widetilde R_{N,M}(z)$ for large $z$ in the vicinity of the line $\arg z = \frac{\pi}{2}$ can now be inferred from \eqref{eq53}, \eqref{eq54}, and the asymptotic properties of the hyperterminants $F^{(1)}$ and $F^{(2)}$. With $\varphi= \arg(\sigma z)$, $N\sim |\sigma z|$ and fixed non-negative integer $n$, we show in Theorem \ref{maintheorem} that $F^{(2)}$ admits the following asymptotic behaviour for large $|\sigma z|$:
\begin{multline*}
\frac{\e^{-2\sigma z}}{(2\pi \im)^2 z^{2N-2-n}} F^{(2)} \!\left( z;\!\begin{array}{c}
     N,\!\!\!\!   \\ 
     \sigma ,\!\!\!\! \\
\end{array}\begin{array}{c}
    N-n\! \\ 
    \sigma \!\\
\end{array} \right) = \frac{1}{4}\erfc\Big(  c(\varphi ) \sqrt{| \sigma z |}   \Big)+\frac{1}{8}\erfc^2\Big(  c(\varphi ) \sqrt{\tfrac{1}{2}| \sigma z |}   \Big) \\ + \begin{dcases}
    \O\bigg( \bigg| \cfrac{\e^{ - \left| \sigma z \right|c^2 (\varphi )} }{\sqrt{|\sigma z|} } \bigg| \bigg), & \text{if }\; -\pi < -\pi + \delta \le \varphi \le \pi, \\
    \O\bigg( \bigg| \cfrac{\e^{ - \frac{1}{2}\left| \sigma z \right|c^2 (\varphi )} }{\sqrt{|\sigma z|} } \bigg| \bigg), & \text{if }\; \pi < \varphi \le 3\pi - \delta < 3\pi,
  \end{dcases}
\end{multline*}
with a conjugate behaviour in the sector $-3\pi<-3\pi+\delta \le \varphi \le \pi-\delta<\pi$. The quantity $c(\varphi )$ is defined by \eqref{eq48} and \eqref{eq19}. To estimate the $F^{(2)}$ hyperterminants with ``mixed" singulants, we use
\[
\frac{\e^{-2\sigma z}}{(2\pi \im)^2 z^{2N-2-n}} F^{(2)} \!\left( z;\!\begin{array}{c}
     N,\!\!\!\!   \\ 
     \sigma ,\!\!\!\! \\
\end{array}\begin{array}{c}
    N-n\! \\ 
    \sigma \e^{\pm\pi\im} \!\\
\end{array} \right)= \begin{dcases}
    \O\bigg( \bigg| \cfrac{\e^{ - \left| \sigma z \right|c^2 (\varphi )} }{\sqrt{|\sigma z|} } \bigg| \bigg), & \text{if }\; |\varphi| \le \pi, \\
    \O\bigg( \bigg| \cfrac{\e^{ - \frac{1}{2}\left| \sigma z \right|c^2 (\varphi )} }{\sqrt{|\sigma z|} } \bigg| \bigg), & \text{if }\; \pi < |\varphi| \le 2\pi - \delta < 2\pi \end{dcases}
\]
(see Proposition \ref{prop}). Accordingly, the main contribution to $R_{N,M} (z)$ comes from the first sum in \eqref{eq53}. Neglecting error terms and keeping only the leading term in the expansion \eqref{eq19}, we find that
\[
R_{N,M} (z) \sim \e^{4\pi \im z} \sum\limits_{k = 0}^{K-1} (-1)^k \frac{\gamma_k}{z^k}  \times \left(\frac{1}{4}\erfc\left( \left(\tfrac{\pi}{2}-\theta\right)\sqrt{2\pi |z|} \right)+\frac{1}{8}\erfc^2\left( \left(\tfrac{\pi}{2}-\theta\right)\sqrt{\pi |z|} \right)\right)
\]
for large $z$ such that $\theta=\arg z= \frac{\pi}{2}+\O(|z|^{-1/2})$, and with $K\geq 1$ held fixed. The combination of the $\erfc$ functions increases rapidly but smoothly from being exponentially small for $\theta < \frac{\pi}{2}$, to $\frac{3}{8}$ when $\theta = \frac{\pi}{2}$, and finally to almost $1$ when $\theta > \frac{\pi}{2}$. This is in complete accordance with the numerical results presented in the preceding subsection. The treatment in the neighbourhood of $\arg z = -\frac{\pi}{2}$ is similar, with the third sum in \eqref{eq53} then controlling the dominant behaviour of $R_{N,M} (z)$. Similarly, the transitional behaviour of $\widetilde R_{N,M} (z)$ from $\theta \le \frac{\pi}{2}$ to $\theta \ge \frac{\pi}{2}$ is essentially described by the first two sums in \eqref{eq54}. Therefore, 
\[
\widetilde R_{N,M} (z) \sim -\e^{4\pi \im z} \sum\limits_{k = 0}^{K-1} \frac{\gamma_k}{z^k}  \times \left(\frac{1}{4}\erfc\left( \left(\tfrac{\pi}{2}-\theta\right)\sqrt{2\pi |z|} \right)-\frac{1}{8}\erfc^2\left( \left(\tfrac{\pi}{2}-\theta\right)\sqrt{\pi |z|} \right)\right)
\]
for large $z$ such that $\theta= \frac{\pi}{2}+\O(|z|^{-1/2})$, and with fixed $K\geq 1$. The expression in the large parentheses increases rapidly but smoothly from approximately $0$ for $\theta < \frac{\pi}{2}$, to $\frac{1}{8}$ when $\theta = \frac{\pi}{2}$, and then drops down to $0$ again when $\theta > \frac{\pi}{2}$. This agrees with the numerical observation discussed in the previous subsection. It is thus seen that the role of the $F^{(1)}$ functions in the first line of \eqref{eq54} is to cancel the contributions switched on by the corresponding $F^{(2)}$ functions in the second line of \eqref{eq54} across the line $\arg z = \frac{\pi}{2}$. A similar result is obtained for the Stokes line $\arg z = -\frac{\pi}{2}$, where the third and fourth sums in \eqref{eq54} are dominant.

This process can be continued to produce a sequence of re-expanded remainder terms, each of which is exponentially smaller than its predecessor. The contributions scaled by the pre-factors $\e^{\pm 2\pi \im k z}$ are then born from the remainders of the optimally truncated Level $(k-1)$ hyperasymptotic expansions. Note that, in contrast to the case of $\log \Gamma(z)$, the smooth transition of successively smaller exponential contributions can only be demonstrated one at a time since each time the level is increased, the number of terms at each previous level in the hyperasymptotic expansions increases by approximately $2\pi|z|$.

\section{Howls' conjecture}\label{section4}

Howls (C.~J.~Howls, personal communication, December, 2017) conjectured the following formal power series identity involving hyperterminants:
\begin{equation}\label{eq26}
\exp \left( \sum\limits_{k = 1}^\infty \frac{(2\pi \im)^{k-1}}{k} F^{(1)} \!\left( z;\!\begin{array}{c}
   kN - k + 1 \! \\
   k\sigma \! \\
\end{array} \right)t^k \right) = \sum\limits_{m = 0}^\infty F^{(m)}\!\left( z;\! \begin{array}{c}
   N ,\,\ldots,\!\!\!\!  \\ 
   \,\sigma ,\,\,\ldots,\!\!\!\!  \\
\end{array}\begin{array}{c}
   N \!\!  \\
   \sigma  \!\!  \\
\end{array} \right)  t^m .
\end{equation}
This rather unexpected identity is suggested by the exponentially improved asymptotic expansion of $\log \Gamma(z)$ \cite[\S6.4.2]{Paris2001} and the hyperasymptotic expansion (to all levels) of $\Gamma(z)$. Our aim in this section is to rigorously interpret and verify this conjecture. To this end, it proves convenient to introduce the complete Bell polynomials ${\bf Y}_m (y_1 , \ldots ,y_m )$ of $m$ complex variables $y_1 , \ldots ,y_m$. They may be defined by
\begin{equation}\label{eq39}
{\bf Y}_m (y_1 , \ldots ,y_m ) = \sum\limits_{\pi(m)}\prod\limits_{j = 1}^m \frac{y_j^{k_j }}{k_j!},
\end{equation}
where $\pi(m)$ denotes a partition of $m$, usually denoted by $1^{k_1}2^{k_2}\cdots m^{k_m}$, with $k_1+2k_2+\ldots+m k_m=m$; $k_j\geq 0$ being the number of parts of size $j$. By convention, ${\bf Y}_0=1$. The complete Bell polynomials satisfy the generating function identity 
\begin{equation}\label{eq28}
\exp \left(\sum\limits_{k = 0}^\infty y_k t^k \right) = \sum\limits_{m = 0}^\infty {\bf Y}_m (y_1 , \ldots ,y_m )t^m .
\end{equation}
It should be noted that our notation differs from that used commonly in the literature. For example, the polynomials $Y_m$ discussed in \cite{Bell1934} or \cite[\S5.2]{Riordan1968} are related to ours via $Y_m (1!y_1 , \ldots ,m!y_m ) = m!{\bf Y}_m (y_1 , \ldots ,y_m )$. In view of \eqref{eq26} and \eqref{eq28}, we may reformulate Howls' conjecture as
\begin{equation}\label{eq29}
F^{(m)}\!\left( z;\! \begin{array}{c}
   N ,\,\ldots,\!\!\!\!  \\ 
   \,\sigma ,\,\,\ldots,\!\!\!\!  \\
\end{array}\begin{array}{c}
   N \!\!  \\
   \sigma  \!\!  \\
\end{array} \right) = {\bf Y}_m \left( F^{(1)} \!\left( z;\!\begin{array}{c}
   N \!  \\
   \sigma \!  \\
\end{array} \right), \ldots ,\frac{(2\pi \im)^{m - 1}}{m}F^{(1)}\! \left( z;\!\begin{array}{c}
   mN -m+ 1 \! \\
   m\sigma \! \\
\end{array} \right) \right).
\end{equation}
Notice that this identity, unlike \eqref{eq26}, does not suffer from possible divergence issues. We can now formulate the following theorem, which is the rigorous form of Howls' conjecture.

\begin{theorem} Let $N$ be a complex number such that $\Re(N)>1$, and let $\sigma$ be an arbitrary element of $\widehat{\mathbb C}$. Then the identity \eqref{eq29} holds for any $m\geq 0$ and for all values of $\arg z$.
\end{theorem}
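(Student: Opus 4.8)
The plan is to recast the identity \eqref{eq29} as a three-term recurrence in the level $m$ and then to establish that recurrence by a residue analysis of the defining multiple integrals. I would first dispose of the domain issue: both sides of \eqref{eq29} are analytic functions of $z$ on the whole surface $\widehat{\mathbb C}$, so it suffices to prove the identity on the sheet $|\arg(\sigma z)|<\pi$ where the integrals \eqref{eq31}--\eqref{eq34} converge, the general statement following by analytic continuation (and optionally cross-checked against the connection formula \eqref{eq49}). To set up the recurrence, write $R_m$ for the right-hand side of \eqref{eq29}, so that $R_m={\bf Y}_m(Q_1,\ldots,Q_m)$ with $Q_j=\frac{(2\pi\im)^{j-1}}{j}F^{(1)}(z;jN-j+1,j\sigma)$ and $R_0=1$. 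Logarithmic differentiation of the generating function \eqref{eq28} (Newton's identity) shows that $R_m$ is the unique solution of
\[
m\,R_m=\sum_{k=1}^m P_k\,R_{m-k},\qquad P_k:=(2\pi\im)^{k-1}F^{(1)}(z;kN-k+1,k\sigma),\quad R_0=1.
\]
Since $F^{(0)}=1$ as well, the theorem follows by induction on $m$ once I show that the hyperterminants satisfy the \emph{same} recurrence,
\[
m\,F^{(m)}(z;{\textstyle N,\ldots,N};{\textstyle \sigma,\ldots,\sigma})=\sum_{k=1}^m P_k\,F^{(m-k)}(z;{\textstyle N,\ldots,N};{\textstyle \sigma,\ldots,\sigma}),\tag{$\star$}
\]
which is therefore the heart of the matter.

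I would obtain $(\star)$ from a \emph{quasi-shuffle} product formula for the hyperterminants, modelled on the level-two case. Inserting the partial-fraction identity $\frac{1}{(z-t_1)(z-t_2)}=\frac{1}{(z-t_1)(t_1-t_2)}+\frac{1}{(z-t_2)(t_2-t_1)}$ into the product $\bigl(F^{(1)}(z;N,\sigma)\bigr)^2$ and using the symmetry of the surviving integrand $\e^{\sigma(t_1+t_2)}(t_1t_2)^{N-1}$ shows that this product equals twice the ordered (nested) integral, up to the discrepancy in how the pole $t_2=t_1$ is encircled. Under the convention \eqref{eq27} (pole to the left of the $t_2$-contour) that discrepancy is a single residue at $t_2=t_1$, which collapses the two layers into one integral with doubled singulant, yielding
\[
2\,F^{(2)}(z;{\textstyle N,N};{\textstyle \sigma,\sigma})=\bigl(F^{(1)}(z;N,\sigma)\bigr)^2+2\pi\im\,F^{(1)}(z;2N-1,2\sigma),
\]
that is, $(\star)$ for $m=2$.

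For general $m$ the idea is to iterate this procedure. Starting from $P_k\,F^{(m-k)}$, the single integral defining $P_k$ is inserted into the $(m-k)$-fold chain, and repeated partial-fraction splitting together with contour deformation—using the $\varepsilon$-regularised contours of \eqref{eq27} to keep every pole on a definite side—expresses everything in terms of genuine level-$m$ nested integrals plus residue terms. In each such residue term a cluster of $\ell$ adjacent equal-singulant variables merges into a single integral carrying singulant $\ell\sigma$, parameter $\ell N-\ell+1$, and prefactor $(2\pi\im)^{\ell-1}$, i.e. into a factor $P_\ell$, while the un-merged variables reassemble into a lower hyperterminant. Collecting these contributions over all ways the layers can cluster is exactly the combinatorics underlying Newton's identity: in the resulting quasi-shuffle algebra the equal-singulant $F^{(m)}$ play the role of complete homogeneous symmetric functions and the merged integrals $P_k$ the role of power sums, and $(\star)$ is the corresponding Newton relation.

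The main obstacle is the residue bookkeeping that makes this precise. I expect the two delicate points to be: (i) rigorously justifying the contour deformations inside the iterated integral and pinning down every sign and every factor of $2\pi\im$ under the one-sided convention \eqref{eq27}, where coincident singulants force careful use of the $\varepsilon$-regularisation and, as in the derivation of \eqref{eq54}, occasional appeals to \eqref{eq49} when an intermediate $F^{(1)}$ falls outside its principal domain; and (ii) proving the combinatorial assembly, namely that the clustered residues, summed over all admissible mergings, reproduce precisely the power-sum-weighted terms $P_k\,F^{(m-k)}$ with the single overall multiplicity $m$ on the left of $(\star)$ and not some other count. Once (i) and (ii) are settled, the reduction above closes the induction and establishes \eqref{eq29} for all $m\ge 0$ and, by analytic continuation, for all values of $\arg z$.
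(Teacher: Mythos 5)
Your opening reduction is sound, and it is in fact the same first move the paper makes: the paper also feeds the Bell-polynomial recurrence ${\bf Y}_m(y_1,\ldots,y_m)=\frac{1}{m}\sum_{k=1}^m k\,y_k\,{\bf Y}_{m-k}$ into an induction on $m$, which turns \eqref{eq29} into exactly your relation $(\star)$. The genuine divergence is in how $(\star)$ is then established, and this is where your proposal has a gap rather than a proof. The paper never manipulates the multiple integrals directly. Instead, it shows -- using the connection formula \eqref{eq49} and the induction hypothesis -- that the Bell-polynomial side $f^{(m)}$ obeys the same monodromy relation
\[
f^{(m)} (z\e^{ - 2\pi \im} ;N,\sigma ) = f^{(m)} (z;N,\sigma ) - 2\pi \im\,\e^{\sigma z} z^{N - 1} F^{(m-1)}\!\left( z;\! \begin{array}{c}
   N ,\,\ldots,\!\!\!\!  \\
   \,\sigma ,\,\,\ldots,\!\!\!\!  \\
\end{array}\begin{array}{c}
   N \!\!  \\
   \sigma  \!\!  \\
\end{array} \right)
\]
as the hyperterminant itself, and then reconstructs $F^{(m)}$ from this branch-cut jump by a keyhole-contour Cauchy integral (or, in the paper's alternative finish, by Liouville's theorem, after using Lemmas \ref{lemma1} and \ref{lemma2} to show the difference of the two sides is entire and $\O(|z|^{-1})$). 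You propose instead to prove $(\star)$ directly by a quasi-shuffle/residue computation on the iterated integrals. Your level-two identity $2F^{(2)}=\bigl(F^{(1)}\bigr)^2+2\pi\im\,F^{(1)}(z;2N-1,2\sigma)$ is correct, but for general $m$ what you give is a plan, not an argument: the two items you yourself flag as ``delicate points'' are precisely the mathematical content of the theorem, and they are left entirely open.

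To be concrete about why those points are not routine bookkeeping: since all singulants are equal, every contour in \eqref{eq34} runs along the same ray, so each pole $t_{k-1}=t_k$ lies \emph{on} the contour and every splitting must be performed inside the $\varepsilon$-regularisation \eqref{eq27}; furthermore, a merged cluster of $\ell$ variables produces $F^{(1)}(z;\ell N-\ell+1,\ell\sigma)$ whose singulant $\ell\sigma$ has the \emph{same} argument as $\sigma$, so its contour again coincides in direction with those of the unmerged variables, and the merging procedure must be iterated coherently against them. Finally, the assertion that the clustered residues assemble into exactly $\sum_k P_k F^{(m-k)}$ with overall multiplicity $m$ and with the weights $(2\pi\im)^{\ell-1}$ is claimed by analogy with ``the combinatorics underlying Newton's identity,'' not derived; establishing that the regularised integrals genuinely form a quasi-shuffle algebra with these structure constants is the whole difficulty. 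Two ways to close the gap: either carry out that algebra rigorously by induction on $m$ (splitting off the outermost variable and tracking each residue under \eqref{eq27}), or -- more economically, and in the spirit of the paper -- prove $(\star)$ by uniqueness: show both sides have the same discontinuity across $\arg(\sigma z)=\pm\pi$ via \eqref{eq49}, are $\O(|z|^{-1})$ as $z\to\infty$ and bounded as $z\to0$, and conclude equality by the Cauchy keyhole (or Liouville) argument.
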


\begin{proof} For the sake of brevity, let us denote the right-hand side of \eqref{eq29} by $f^{(m)} (z;N,\sigma )$. Thus, we are required to prove that
\begin{equation}\label{eq30}
f^{(m)} (z;N,\sigma )=
F^{(m)}\!\left( z;\! \begin{array}{c}
   N ,\,\ldots,\!\!\!\!  \\ 
   \,\sigma ,\,\,\ldots,\!\!\!\!  \\
\end{array}\begin{array}{c}
   N \!\!  \\
   \sigma  \!\!  \\
\end{array} \right)
\end{equation}
for all $m\geq 0$. We proceed by induction on $m$. It is readily seen that the equality \eqref{eq30} holds when $m=0$ or $1$. Assume that it holds for $0,1,\ldots,m-1$ ($m\geq 1$). Employing the recurrence relation
\[
{\bf Y}_m (y_1 , \ldots ,y_m ) = \frac{1}{m}\sum\limits_{k = 1}^m k y_k {\bf Y}_{m - k} (y_1 , \ldots ,y_{m - k} ) 
\]
(cf. \cite[Eq. (3), \S5.2]{Riordan1968}) and the induction hypothesis, we obtain
\[
f^{(m)} (z;N,\sigma ) = \frac{1}{m}\sum\limits_{k = 1}^m  (2\pi \im)^{k - 1} F^{(1)}\! \left( z;\!\begin{array}{c}
   kN -k+ 1 \! \\
   k\sigma \! \\
\end{array} \right) F^{(m-k)}\!\left( z;\! \begin{array}{c}
   N ,\,\ldots,\!\!\!\!  \\ 
   \,\sigma ,\,\,\ldots,\!\!\!\!  \\
\end{array}\begin{array}{c}
   N \!\!  \\
   \sigma  \!\!  \\
\end{array} \right) .
\]
If we replace $z$ by $z\e^{-2\pi \im}$ in this equality, apply the connection formula \eqref{eq49} for each hyperterminant on the right-hand side, and use \eqref{eq30} with $m-1$ in place of $m$, we obtain after some simplification that
\begin{equation}\label{eq32}
f^{(m)} (z\e^{ - 2\pi \im} ;N,\sigma ) = f^{(m)} (z;N,\sigma ) - 2\pi \im\e^{\sigma z} z^{N - 1} F^{(m-1)}\!\left( z;\! \begin{array}{c}
   N ,\,\ldots,\!\!\!\!  \\ 
   \,\sigma ,\,\,\ldots,\!\!\!\!  \\
\end{array}\begin{array}{c}
   N \!\!  \\
   \sigma  \!\!  \\
\end{array} \right).
\end{equation}
Now let us assume, for simplicity, that $|\arg z|<\pi$ and $\arg \sigma =0$. Let $\gamma$ be the positively oriented key-hole contour depicted in Figure \ref{Figure2}. By Cauchy's integral theorem, we can write
\begin{figure}[!t]
	\centering
		\includegraphics[width=0.4\textwidth]{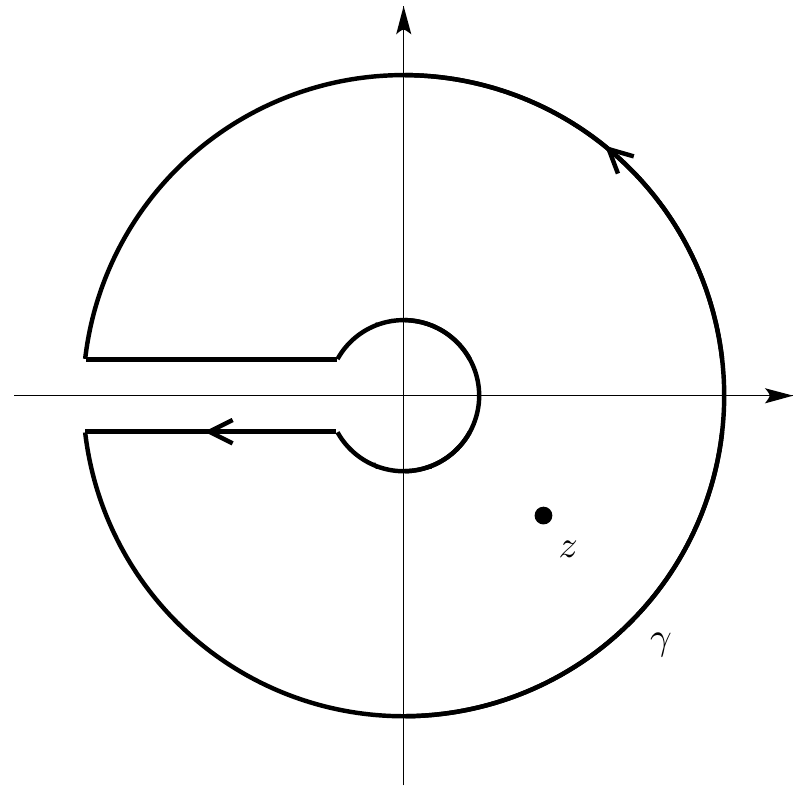}
		\caption{The contour of integration $\gamma$.}
		\label{Figure2}
\end{figure}
\[
f^{(m)} (z;N,\sigma ) = \frac{1}{2\pi \im}\oint_\gamma \frac{f^{(m)} (t;N,\sigma )}{t - z}\d t
\]
for all $z$ lying inside the contour $\gamma$. By virtue of the equality \eqref{eq33} and the standard asymptotic expansion of the incomplete gamma function (see, e.g., \cite[\href{https://dlmf.nist.gov/8.11.i}{\S8.11(i)}]{NIST:DLMF}), each hyperterminant on the right-hand side of \eqref{eq29} is $\O(|z|^{-1})$ as $z\to \infty$ in $|\arg z|\leq \pi$, provided all the other parameters held fixed. Consequently, for each $m\geq 2$ and fixed $N$, $\sigma$, $f^{(m)} (z;N,\sigma ) =\O(|z|^{-1})$ as $z\to \infty$ in the sector $|\arg z|\leq \pi$. The hyperterminants remain bounded as $z\to 0$ in $|\arg z|\leq \pi$, and thus so does $f^{(m)} (z;N,\sigma )$. Therefore, when the radius of the large circular portion of the contour $\gamma$ approaches $+\infty$, the integral along it tends to zero. Similarly, when the radius of the small circular arc tends to $0$, the integral along it tends to zero. If now we write $t=s \e^{\pm \pi \im}$ on the two rays, we find that 
\[
f^{(m)} (z;N,\sigma ) = \frac{1}{2\pi \im}\int_0^{ + \infty } \frac{f^{(m)} (s\e^{ - \pi \im} ;N,\sigma ) - f^{(m)} (s\e^{\pi \im} ;N,\sigma )}{s + z}\d s
\]
provided $|\arg z|<\pi$. Finally, the integrand may be expressed in terms of the hyperterminant $F^{(m-1)}$ by using \eqref{eq32}, and hence
\begin{align*}
f^{(m)} (z;N,\sigma ) &= \e^{\pi \im N} \int_0^{ + \infty } \frac{\e^{ - \sigma s} s^{N - 1} }{s+z} F^{(m-1)}\!\left( s\e^{\pi \im} ;\! \begin{array}{c}
   N ,\,\ldots,\!\!\!\!  \\ 
   \,\sigma ,\,\,\ldots,\!\!\!\!  \\
\end{array}\begin{array}{c}
   N \!\!  \\
   \sigma  \!\!  \\
\end{array} \right)\d s
\\ & =\int_0^{[\pi^-]} \frac{\e^{\sigma t} t^{N-1}}{z - t}F^{(m-1)}\!\left( t;\! \begin{array}{c}
   N ,\,\ldots,\!\!\!\!  \\ 
   \,\sigma ,\,\,\ldots,\!\!\!\!  \\
\end{array}\begin{array}{c}
   N \!\!  \\
   \sigma  \!\!  \\
\end{array} \right)\d t.
\end{align*}
It is seen from \eqref{eq34} and \eqref{eq27} that this integral is precisely the hyperterminant $F^{(m)}$ appearing on the right-hand side of the equality \eqref{eq30}. The restrictions on $\arg z$ and $\arg \sigma$ can now be dropped by appealing to analytic continuation.
\end{proof}

\begin{remark} Note that there is an alternative way to finish the proof of the theorem. Denote by $g^{(m)} (z;N,\sigma )$ the difference of the two sides of \eqref{eq30}. Assume for a moment that $N>2$. Then the functional equations \eqref{eq49} and \eqref{eq32}, Lemma \ref{lemma2} and Riemann's theorem on removable singularities together imply that $g^{(m)} (z;N,\sigma )$ is an entire function. From Lemma \ref{lemma1} we can assert that $g^{(m)} (z;N,\sigma ) =\O(|z|^{-1})$ as $z\to \infty$. Hence by Liouville's theorem, $g^{(m)} (z;N,\sigma )$ is identically zero. The restriction $N>2$ can be removed by analytic continuation.
\end{remark}

\section{Uniform asymptotic approximations for the hyperterminants}\label{section5}

For the gamma function and its reciprocal, the smoothing of the (higher-order) Stokes phenomenon is described in terms of $F^{(m)}$ hyperterminants of the form
\[
F^{(m)}\!\left( z;\! \begin{array}{c}
   N ,\,\ldots,\!\!\!\!  \\ 
   \,\sigma ,\,\,\ldots,\!\!\!\!  \\
\end{array}\begin{array}{c}
   N, \!\!\!  \\
   \sigma,  \!\!\!  \\
\end{array}\begin{array}{c}
   N-n \!\!  \\
   \sigma  \!\!  \\
\end{array} \right).
\]
Of most interest for our purpose is their behaviour when $N\sim |\sigma z| \gg 1$, $n$ is a fixed non-negative integer, and $\varphi=\arg(\sigma z)$ increases (resp. decreases) through $\pi$ (resp. $-\pi$). In these circumstances, it is seen from \eqref{eq40} and \eqref{eq19} that, when suitably normalised, $F^{(1)}$ possesses the property of changing rapidly, but smoothly, from being exponentially small to being approximately $1$ as $\varphi$ increases continuously through $\pi$. A similar behaviour is present near the ray $\varphi=-\pi$. In Theorem \ref{maintheorem} below we show that each of the higher-level terminants $F^{(m)}$, $m\ge 2$, exhibits this behaviour, which is, however, no longer described by a single complementary error function but rather by a multivariate polynomial in such functions with properly scaled arguments. That the leading-order asymptotics should take this form is, of course, no surprise, in view of the identity \eqref{eq29} which plays an essential role in our proof.

\begin{theorem}\label{maintheorem} Let $m$ be a positive integer and $\sigma$ be any element of $\widehat{\mathbb C}$. Let $N = |\sigma z|+\rho$, $\varphi=\arg(\sigma z)$, and define $c(\varphi)$ by \eqref{eq48} and \eqref{eq19}. Then for any fixed non-negative integer $n$ and large $|\sigma z|$,
\begin{equation}\label{eq41}
\begin{multlined}
\frac{\e^{ - m\sigma z} }{(2\pi \im)^m z^{mN - m-n}} F^{(m)}\!\left( z;\! \begin{array}{c}
   N ,\,\ldots,\!\!\!\!  \\ 
   \,\sigma ,\,\,\ldots,\!\!\!\!  \\
\end{array}\begin{array}{c}
   N, \!\!\!  \\
   \sigma,  \!\!\!  \\
\end{array}\begin{array}{c}
   N-n \!\!  \\
   \sigma  \!\!  \\
\end{array} \right) = \sum\limits_{\pi (m)} \prod\limits_{j = 1}^m \frac{1}{(2j)^{k_j } k_j !}\erfc^{k_j } \Big( c(\varphi )\sqrt {\tfrac{j}{2}\left| \sigma z \right|} \Big)  \\  + \begin{dcases}
    \O\bigg( \bigg| \cfrac{\e^{ - \frac{m}{2}\left| \sigma z \right|c^2 (\varphi )} }{\sqrt{|\sigma z|} } \bigg| \bigg), & \text{if }\; -\pi < -\pi + \delta \le \varphi \le \pi, \\
    \O\bigg( \bigg| \cfrac{\e^{ - \frac{1}{2}\left| \sigma z \right|c^2 (\varphi )} }{\sqrt{|\sigma z|} } \bigg| \bigg), & \text{if }\; \pi < \varphi \le 3\pi - \delta < 3\pi,
  \end{dcases}
\end{multlined}
\end{equation}
and
\begin{equation}\label{eq45}
\begin{multlined}
\frac{\e^{ - 2\pi \im mN} \e^{ - m\sigma z} }{(2\pi \im)^m z^{mN - m-n}} F^{(m)}\!\left( z;\! \begin{array}{c}
   N ,\,\ldots,\!\!\!\!  \\ 
   \,\sigma ,\,\,\ldots,\!\!\!\!  \\
\end{array}\begin{array}{c}
   N, \!\!\!  \\
   \sigma,  \!\!\!  \\
\end{array}\begin{array}{c}
   N-n \!\!  \\
   \sigma  \!\!  \\
\end{array} \right) = \sum\limits_{\pi (m)} \prod\limits_{j = 1}^m \frac{(-1)^{k_j}}{(2j)^{k_j } k_j !}\erfc^{k_j } \Big( \overline{c(-\varphi)}\sqrt {\tfrac{j}{2}\left| \sigma z \right|} \Big) \\ + \begin{dcases}
    \O\bigg( \bigg| \cfrac{\e^{ - \frac{m}{2}\left| \sigma z \right|c^2 (-\varphi )} }{\sqrt{|\sigma z|} } \bigg| \bigg), & \text{if }\; -\pi  \le \varphi \le \pi- \delta <\pi, \\
    \O\bigg( \bigg| \cfrac{\e^{ - \frac{1}{2}\left| \sigma z \right|c^2 (-\varphi )} }{\sqrt{|\sigma z|} } \bigg| \bigg), & \text{if }\; -3\pi < -3\pi + \delta \leq \varphi < -\pi,
  \end{dcases}
\end{multlined}
\end{equation}
uniformly with respect to $\varphi$ and bounded real values of $\rho$. The index $\pi(m)$ runs through all partitions of $m$ into non-negative parts, i.e., over all non-negative integer solutions of the equation $k_1+2k_2+\ldots+m k_m=m$.
\end{theorem}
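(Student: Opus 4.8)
The plan is to reduce the $m$-fold integral $F^{(m)}$ to the level-one terminant $F^{(1)}$, whose transitional behaviour \eqref{eq40} is already known, and then to assemble the resulting complementary error functions by the recurrence satisfied by the complete Bell polynomials. For the all-equal case $n=0$ this reduction is exactly \eqref{eq29}: writing $y_j=\frac{(2\pi\im)^{j-1}}{j}F^{(1)}(z;jN-j+1,j\sigma)$ and noting that the $j$th singulant $j\sigma$ has $\arg(j\sigma z)=\varphi$ and $|j\sigma z|=j|\sigma z|$, \eqref{eq40} gives $F^{(1)}(z;jN-j+1,j\sigma)=2\pi\im\,z^{jN-j}\e^{j\sigma z}\big(\tfrac12\erfc(c(\varphi)\sqrt{\tfrac{j}{2}|\sigma z|})+\ldots\big)$. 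Substituting into ${\bf Y}_m=\sum_{\pi(m)}\prod_j y_j^{k_j}/k_j!$ and using $\sum_j jk_j=m$, all prefactors collapse against the normalisation $\e^{-m\sigma z}/((2\pi\im)^m z^{mN-m})$, leaving precisely $\sum_{\pi(m)}\prod_j \frac{1}{(2j)^{k_j}k_j!}\erfc^{k_j}(c(\varphi)\sqrt{\tfrac{j}{2}|\sigma z|})$, the claimed leading term of \eqref{eq41}.

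To cover the shifted last index $N-n$, I would first establish the ``pointed'' companion of \eqref{eq29},
\[
F^{(m)}\!\left( z;\! \begin{array}{c} N ,\,\ldots,\!\!\!\! \\ \,\sigma ,\,\,\ldots,\!\!\!\! \\ \end{array}\begin{array}{c} N, \!\!\! \\ \sigma, \!\!\! \\ \end{array}\begin{array}{c} N-n \!\! \\ \sigma \!\! \\ \end{array} \right) = \frac{1}{m}\sum_{j=1}^m (2\pi\im)^{j-1} F^{(1)}\!\left(z;\!\begin{array}{c} jN-j+1-n\! \\ j\sigma\! \\ \end{array}\right) F^{(m-j)}\!\left( z;\! \begin{array}{c} N ,\,\ldots,\!\!\!\! \\ \,\sigma ,\,\,\ldots,\!\!\!\! \\ \end{array}\begin{array}{c} N \!\! \\ \sigma \!\! \\ \end{array} \right),
\]
which expresses the mixed terminant through a single shifted $F^{(1)}$ and an all-equal $F^{(m-j)}$. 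I would prove it by the method already used for \eqref{eq29}: both sides obey the same connection formula \eqref{eq49} (the first upper index is still $N$, and $z^{-n}$ is invariant under $z\mapsto z\e^{-2\pi\im}$ since $n\in\mathbb{Z}$), after which the algebra mirrors \eqref{eq32}; both sides are $\O(|z|^{-1})$ and bounded at the origin, so the keyhole-contour representation of Section \ref{section4} — or the Liouville argument of the Remark — forces equality. Feeding \eqref{eq40} and the $n=0$ result into this identity, the powers of $z$, $\e^{\sigma z}$ and $2\pi\im$ again cancel and the normalised mixed terminant becomes $\frac1m\sum_{j=1}^m \tfrac12\erfc(c(\varphi)\sqrt{\tfrac{j}{2}|\sigma z|})\,P_{m-j}(\varphi)$, where $P_\ell$ denotes the claimed level-$\ell$ polynomial. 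This is exactly the Bell recurrence ${\bf Y}_m=\frac1m\sum_k k u_k{\bf Y}_{m-k}$ with $u_j=\frac{1}{2j}\erfc(c(\varphi)\sqrt{\tfrac{j}{2}|\sigma z|})$, so it closes an induction on $m$ and reproduces the leading term for general $n$.

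The principal obstacle is the error analysis, since the two stated remainder orders must be recovered uniformly as $\varphi$ passes through $\pi$. Here I would propagate the \eqref{eq40}-error $E_j=\O(\e^{-\frac{j}{2}|\sigma z|c^2(\varphi)}/\sqrt{|\sigma z|})$ (sharpened by \eqref{eq52} in the region bounded away from $\pi$) through the finite sum. For $-\pi+\delta\le\varphi\le\pi$ every $\erfc$ factor is exponentially small and, because $\sum_j jk_j=m$, each monomial carries the common exponential $\e^{-\frac{m}{2}|\sigma z|c^2(\varphi)}$; replacing one factor by its error preserves this exponential at the cost of one power $|\sigma z|^{-1/2}$, the single-block partition giving the dominant $\O(\e^{-\frac{m}{2}|\sigma z|c^2}/\sqrt{|\sigma z|})$. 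For $\pi<\varphi\le 3\pi-\delta$ the factors tend to $2$, the polynomial itself absorbs the $2-\erfc(-\,\cdot\,)$ corrections, and the leftover is controlled by the largest terminant error $E_1=\O(\e^{-\frac12|\sigma z|c^2}/\sqrt{|\sigma z|})$, which explains the different exponent $\tfrac12$. Allowing bounded $\rho$ only perturbs the effective orders $jN-j+1$ by $\O(1)$, a shift that \eqref{eq40} already tolerates, so the bounds are uniform in $\varphi$ and in $\rho$.

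Finally, the conjugate statement \eqref{eq45} would follow by applying the same argument in the regime where $\varphi$ decreases through $-\pi$. Invoking the conjugate form of \eqref{eq40} recorded there — which, after the $\e^{-2\pi\im mN}$ renormalisation, contributes $-\tfrac12\erfc(\overline{c(-\varphi)}\sqrt{\tfrac{j}{2}|\sigma z|})$ at each level — the identity above and the same Bell recurrence assemble the polynomial $\sum_{\pi(m)}\prod_j\frac{(-1)^{k_j}}{(2j)^{k_j}k_j!}\erfc^{k_j}(\overline{c(-\varphi)}\sqrt{\tfrac{j}{2}|\sigma z|})$, with the two error regimes of \eqref{eq41} interchanged accordingly.
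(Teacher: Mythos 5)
Your $n=0$ case and the conjugate statement \eqref{eq45} are handled exactly as in the paper (identity \eqref{eq29} combined with Olver's approximation \eqref{eq40} and a binomial expansion of the error, split according to the sign of $\Re(c(\varphi))$), so the issue lies entirely in your passage to general $n$. The ``pointed companion'' identity on which that passage rests is false for every $n\ge 1$, $m\ge 2$, and the method you propose for proving it---checking that both sides obey the same connection formula \eqref{eq49}---in fact disproves it. Write $F^{(1)}(z;M,\mu)$ for the level-one hyperterminant with order $M$ and singulant $\mu$, and take $m=2$, so that your identity reads $F^{(2)}(z;N,N-n;\sigma,\sigma)=\tfrac12F^{(1)}(z;N-n,\sigma)F^{(1)}(z;N,\sigma)+\pi\im\,F^{(1)}(z;2N-1-n,2\sigma)$. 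Under $z\mapsto z\e^{-2\pi\im}$ the left-hand side gains the increment $-2\pi\im\,\e^{\sigma z}z^{N-1}F^{(1)}(z;N-n,\sigma)$, whereas the right-hand side gains $-\pi\im\,\e^{\sigma z}\bigl(z^{N-1}F^{(1)}(z;N-n,\sigma)+z^{N-n-1}F^{(1)}(z;N,\sigma)\bigr)$, the $\e^{2\sigma z}$ cross terms cancelling. The two increments coincide only if $z^{n}F^{(1)}(z;N-n,\sigma)=F^{(1)}(z;N,\sigma)$; but the partial-fraction identity $z^{n}/(z-t)=\sum_{k=0}^{n-1}z^{k}t^{n-1-k}+t^{n}/(z-t)$ gives
\begin{equation*}
z^{n}F^{(1)}(z;N-n,\sigma)-F^{(1)}(z;N,\sigma)=\sum_{k=0}^{n-1}z^{k}\int_0^{[\pi-\arg\sigma]}\e^{\sigma t}t^{N-2-k}\,\d t,
\end{equation*}
a polynomial with non-vanishing coefficients. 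Equal functions must have equal monodromy, so your identity cannot hold; equivalently, the difference of its two sides is not single-valued, hence not entire, and the keyhole/Liouville argument of Section \ref{section4} cannot close. The same mismatch persists for all $m\ge 2$: the cross terms in the product expansion assemble to $\tfrac1m\,z^{N-1-n}F^{(m-1)}(z;N,\ldots,N)$ where the connection formula for the left-hand side requires $\tfrac1m\,z^{N-1}F^{(m-1)}(z;N,\ldots,N,N-n)$, and these differ exactly when $n\ge1$.

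What your identity silently discards is precisely the content of the paper's relation \eqref{eq44}: lowering the final order from $N-n+1$ to $N-n$ produces, besides the factor $z^{-1}$, the sum $\sum_{k=0}^{m-1}F^{(k)}(z;N,\ldots,N)\,F^{(m-k)}(0;N+1,N,\ldots,N-n)$ involving hyperterminants evaluated at the origin. These corrections are not negligible at the stated accuracy: after normalisation they are exactly of the order $\O\bigl(|\e^{-\frac m2|\sigma z|c^{2}(\varphi)}|/\sqrt{|\sigma z|}\bigr)$ of the error term in \eqref{eq41} for $-\pi+\delta\le\varphi\le\pi$, so they must be estimated rather than assumed absent. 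That is what the paper's induction on $n$ does, bounding the origin values via Lemma \ref{lemma2} and the $z$-dependent factors via the crude bound \eqref{eq47}. To repair your argument you would either have to adopt \eqref{eq44} and import those estimates (which is the paper's proof), or state and prove a corrected version of your identity that carries these extra terms explicitly.
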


\begin{remark} If $m=1$, then the $\varphi$-intervals of validity for \eqref{eq41} and \eqref{eq45} are maximal (see \cite[Theorem 1]{Olver1991}). We expect this property to hold generally for all $m\geq 1$, but we shall not pursue a proof of this claim here.
\end{remark}

Successive hyperasymptotic re-expansions of $\Gamma^\ast(z)$ and its reciprocal also involve hyperterminants $F^{(m)}$ with ``mixed" singulants. We expect that the contribution from such hyperterminants is of the same order of magnitude as the error terms in \eqref{eq41} and \eqref{eq45}. We therefore anticipate that the smooth transition of the higher-order Stokes discontinuities is essentially described by \eqref{eq41} and \eqref{eq45}. This is supported by the fact that \eqref{eq41} yields, to leading order, $\frac{1}{k!}\left(\frac{1}{2}\right)_k$ on the Stokes line $\varphi=\pi$, in agreement with \eqref{eq23}. In the following proposition, we verify this assertion in the particular case of $m=2$. We believe that the general case can be established using an appropriate inductive argument, but we leave it as an open question for further research.

\begin{proposition}\label{prop} Let $\sigma$ be any element of $\widehat{\mathbb C}$. Let $N = |\sigma z|+\rho$, $\varphi=\arg(\sigma z)$, and define $c(\varphi)$ by \eqref{eq48} and \eqref{eq19}. Then for any fixed non-negative integer $n$ and large $|\sigma z|$,
\begin{equation}\label{eq55}
\frac{\e^{-2\sigma z}}{(2\pi \im)^2 z^{2N-2-n}} F^{(2)} \!\left( z;\!\begin{array}{c}
     N,\!\!\!\!   \\ 
     \sigma ,\!\!\!\! \\
\end{array}\begin{array}{c}
    N-n\! \\ 
    \sigma \e^{\pm\pi\im} \!\\
\end{array} \right)= \begin{dcases}
    \O\bigg( \bigg| \cfrac{\e^{ - \left| \sigma z \right|c^2 (\varphi )} }{\sqrt{|\sigma z|} } \bigg| \bigg), & \text{if }\; |\varphi| \le \pi, \\
    \O\bigg( \bigg| \cfrac{\e^{ - \frac{1}{2}\left| \sigma z \right|c^2 (\varphi )} }{\sqrt{|\sigma z|} } \bigg| \bigg), & \text{if }\; \pi < |\varphi| \le 2\pi - \delta < 2\pi,
  \end{dcases}
\end{equation}
uniformly with respect to $\varphi$ and bounded real values of $\rho$.
\end{proposition}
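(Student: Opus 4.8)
The plan is to reduce the mixed-singulant $F^{(2)}$ in \eqref{eq55} to a single contour integral, to exploit a cancellation of exponentials with no counterpart in Theorem~\ref{maintheorem}, and then to recognise the surviving integral as a constant multiple of a first-level hyperterminant whose smoothing is already known from \eqref{eq40}. First I would carry out the inner ($t_2$-)integration in the defining double integral \eqref{eq34}. Since the two singulants have distinct arguments, \eqref{eq34} applies directly and, by \eqref{eq31}, the inner integral equals $F^{(1)}(t;N-n,\sigma\e^{\pm\pi\im})$, giving
\[
F^{(2)} \!\left( z;\!\begin{array}{c} N,\!\!\!\! \\ \sigma ,\!\!\!\! \\ \end{array}\begin{array}{c} N-n\! \\ \sigma \e^{\pm\pi\im} \!\\ \end{array} \right)=\int_0^{[\pi-\arg\sigma]}\frac{\e^{\sigma t}t^{N-1}}{z-t}\,F^{(1)}\!\left( t;\!\begin{array}{c} N-n\! \\ \sigma\e^{\pm\pi\im}\! \\ \end{array}\right)\d t.
\]
Substituting the closed form \eqref{eq33} for the inner $F^{(1)}$ introduces a factor $\e^{\sigma\e^{\pm\pi\im}t}=\e^{-\sigma t}$ that cancels the outer $\e^{\sigma t}$ exactly, leaving
\[
\e^{\pi\im(N-n)}\Gamma(N-n)\int_0^{[\pi-\arg\sigma]}\frac{t^{2N-n-2}}{z-t}\,\Gamma\!\left(1-N+n,\sigma\e^{\pm\pi\im}t\right)\d t.
\]
This disappearance of the Gaussian exponential is the structural reason why the mixed terminant sits at the error-term order of \eqref{eq41} rather than contributing at leading order.

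The next step is to replace the incomplete gamma function by its large-argument asymptotic expansion. Along the contour $\arg t=\pi-\arg\sigma$ its argument $w=\sigma\e^{\pm\pi\im}t$ satisfies $\arg w\equiv 0\pmod{2\pi}$ and is large, while the parameter $1-N+n$ has large negative real part, so the defining integrand $s^{-N+n}\e^{-s}$ is monotone and endpoint-dominated; hence $\Gamma(1-N+n,w)\sim w^{-N+n}\e^{-w}$ applies uniformly on the relevant part of the contour, reinstating $\e^{-w}=\e^{\sigma t}$ and a power of $t$. Integrating the leading term and recognising it through \eqref{eq31} identifies it as an explicit constant times $F^{(1)}(z;N-1,\sigma)$; the remaining terms of the incomplete-gamma expansion contribute further hyperterminants $F^{(1)}(z;N-1-j,\sigma)$ of the same exponential order, controlled by a standard remainder estimate. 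Olver's result \eqref{eq40} then supplies the smoothing: the normalised $F^{(1)}(z;N-1,\sigma)$ equals $\tfrac{1}{2}\erfc\!\big(c(\varphi)\sqrt{\tfrac{1}{2}|\sigma z|}\big)$ plus an exponentially smaller error.

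Finally I would combine this with the explicit constant prefactor. Using Stirling's formula for $\Gamma(N-n)$ together with the identity $\Re\big(\tfrac{1}{2}c^2(\varphi)\big)=1+\cos\varphi$ read off from \eqref{eq48}, the prefactor is shown to have magnitude $|\e^{-\frac{1}{2}|\sigma z|c^2(\varphi)}|/\sqrt{|\sigma z|}$; multiplying by $\tfrac{1}{2}\erfc$ gives, for $|\varphi|\le\pi$ (where the $\erfc$ is itself of size $|\e^{-\frac{1}{2}|\sigma z|c^2}|$), the bound $\O(\e^{-|\sigma z|c^2(\varphi)}/\sqrt{|\sigma z|})$, and for $\pi<|\varphi|\le 2\pi-\delta$ (where the $\erfc$ is $\approx 1$) the bound $\O(\e^{-\frac{1}{2}|\sigma z|c^2(\varphi)}/\sqrt{|\sigma z|})$, which are exactly the two cases in \eqref{eq55}; the transition across $\varphi=\pm\pi$ is simply the inherited Stokes smoothing of $F^{(1)}(z;N-1,\sigma)$. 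The hard part will be making the replacement of the incomplete gamma rigorous and uniform: one must bound the integral of its asymptotic remainder against the singular kernel $1/(z-t)$ uniformly in $\varphi$ across the Stokes rays $\varphi=\pm\pi$ and in bounded $\rho$, and treat both signs in $\sigma\e^{\pm\pi\im}$ (one directly and the other by the conjugate analysis, the incomplete gamma's argument then crossing its own Stokes ray). As a consistency check, \eqref{eq41} with $m=2$ gives the leading value $\tfrac{3}{8}$ at $\varphi=\pi$ (matching \eqref{eq23}), whereas the present estimate makes the mixed terminant only $\O(|\sigma z|^{-1/2})$ there, confirming that such ``mixed'' contributions are absorbed into the error terms of \eqref{eq41} and \eqref{eq45}.
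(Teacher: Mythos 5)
Your reduction of the double integral is correct as far as it goes: performing the $t_2$-integration, substituting the closed form \eqref{eq33}, and cancelling $\e^{\sigma t}$ against $\e^{\sigma \e^{\pm\pi\im}t}$ is sound algebra (for the $+$ sign the paper avoids the branch issue entirely by the phase identity $F^{(2)}(z;N,\sigma;N-n,\sigma\e^{\pi\im})=\e^{-2\pi\im N}F^{(2)}(z;N,\sigma;N-n,\sigma\e^{-\pi\im})$). The genuine gap is the next step. You replace $\Gamma(1-N+n,w)$ by $w^{-N+n}\e^{-w}$ and assert this holds ``uniformly on the relevant part of the contour'' by endpoint dominance. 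That is the Poincar\'e expansion, valid only when $|w|\gg|a|$ with $a=1-N+n$; here $|a|\sim N\sim|\sigma z|$ while $w=|\sigma t|$ runs over $(0,\infty)$, and the mass of your surviving integral sits at $|\sigma t|\approx N-1$, i.e. $t\approx|z|$ --- exactly where the kernel $1/(z-t)$ has its pole. In that region the ratio of consecutive terms of the expansion is $|a-j|/w\approx 1$: the terms do not decrease, no ``standard remainder estimate'' controls the truncation error relative to the retained terms, and in fact $\Gamma(1-N+n,w)=\frac{w^{1-N+n}\e^{-w}}{w+N-n}\,(1+o(1))$ there, which differs from your leading term by the non-constant factor $w/(w+N-n)$ (about $\tfrac12$ at the peak). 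Hence the identification of the mixed terminant as an explicit constant times $F^{(1)}(z;N-1,\sigma)$ --- the step on which your whole erfc-smoothing argument and the invocation of \eqref{eq40} rest --- is not established; repairing it would require uniform (Temme-type) asymptotics of the incomplete gamma function with large parameter, and the corrections would be of the same order as your ``leading term''.

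Moreover, the two places where the real work lies are both deferred. First, what is actually true on the contour ($w>0$) is the inequality $|\Gamma(1-N+n,w)|\le w^{-N+n}\e^{-w}$, which reduces the problem to bounding $\Gamma(N-n)\,|\sigma|^{n-N}\int_0^{\infty}\tau^{N-2}\e^{-|\sigma|\tau}\,|z-t|^{-1}\d\tau$; but as $\varphi\to\pm\pi$ the pole $t=z$ approaches the contour and the naive bound on $1/|z-t|$ degrades like $1/|\sin(\pi-\varphi)|$, so a bound uniform up to and including $|\varphi|=\pi$ needs the contour-rotation argument --- this is precisely what the paper's proof packages into Lemma \ref{lemma1}, applied after collapsing the double integral (by the substitutions $u=t_2/(t_1\e^{\pi\im})$, $s=t_1(1+u)$) into a beta-kernel average of $F^{(1)}((1+u)z;2N-n-1,\sigma)$. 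Second, for $\pi<|\varphi|\le 2\pi-\delta$ your representation does not converge at all; the continuation across $\varphi=\pm\pi$ picks up a residue contribution, which the paper makes explicit via the connection formula \eqref{eq49} as $2\pi\im\,\e^{\sigma z}z^{N-1}F^{(1)}(z;N-n,\sigma\e^{-\pi\im})$ and then estimates with Olver's formula \eqref{eq52}; it is this residue term, not an ``inherited smoothing of $F^{(1)}(z;N-1,\sigma)$'', that produces the weaker bound in the second line of \eqref{eq55}. Your final bookkeeping reproduces the correct orders, but as written the proposal is not a proof.
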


To prove Theorem \ref{maintheorem} and Proposition \ref{prop} we first need to establish some lemmata.

\begin{lemma}\label{lemma1} Let $m$ be a positive integer and $N_1,\ldots,N_m$ be an arbitrary set of real numbers such that $N_k>1$ for $k=1,\ldots,m$. Let $\sigma$ be any element of $\widehat{\mathbb C}$. Then there exists a positive constant $c_m$, depending only on $m$, such that 
\begin{gather}\label{eq38}
\begin{split}
\left| F^{(m)} \!\left( z;\!\begin{array}{c}
   N_1 ,\!\!\!  \\
   \sigma ,\!\!\!  \\
\end{array}\begin{array}{c}
   \ldots ,\!\!\!  \\
    \ldots ,\!\!\!  \\
\end{array}\begin{array}{c}
   N_m \!\!  \\
   \sigma \!\! \\
\end{array} \right)\right| \le \; & c_{m} \frac{1}{|z|}\frac{\sqrt{N_m } \Gamma (N_m )}{|\sigma|^{N_m }}\prod\limits_{k = 1}^{m - 1} \frac{\sqrt{\Sigma_k} \Gamma(N_k  - 1)}{|\sigma|^{N_k- 1}} \\ & \times \begin{cases} 1 & \text{if }\; |\varphi|\leq \pi, \\[0.25em] \left| \cos \varphi \right|^{ - \Sigma_1} & \text{if }\; 
\pi < \varphi < \frac{3\pi}{2}, \end{cases}
\end{split}
\end{gather}
where $\varphi=\arg(\sigma z)$ and $\Sigma_k  = N_k +N_{k+1} +  \ldots  + N_m$.
\end{lemma}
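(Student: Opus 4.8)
The plan is to prove \eqref{eq38} by induction on $m$, exploiting the nested structure of the definition \eqref{eq34}. Writing the innermost $m-1$ integrations as a lower-level hyperterminant in the variable $t_1$, one has the recursion
\[
F^{(m)}(z) = \int_0^{[\pi-\arg\sigma]} \frac{\e^{\sigma t_1}\,t_1^{N_1-1}}{z-t_1}\,F^{(m-1)}(t_1)\,\d t_1,
\]
where $F^{(m-1)}(t_1)$ is the level-$(m-1)$ terminant with denominator variable $t_1$, parameters $N_2,\ldots,N_m$, and all singulants equal to $\sigma$. The point of the induction is that on the contour one has $\arg(\sigma t_1)=\pi$, so the induction hypothesis applied to $F^{(m-1)}(t_1)$ is evaluated at the boundary value $\varphi=\pi$ of its $|\varphi|\le\pi$ regime, where its $\cos$-factor is trivially $1$; thus $|F^{(m-1)}(t_1)|\le c_{m-1}|t_1|^{-1}P_2$ with $P_2=\frac{\sqrt{N_m}\Gamma(N_m)}{|\sigma|^{N_m}}\prod_{k=2}^{m-1}\frac{\sqrt{\Sigma_k}\Gamma(N_k-1)}{|\sigma|^{N_k-1}}$. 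Substituting this in, the whole problem collapses to a single \emph{scalar core estimate}: parametrising $\sigma t_1 = r\e^{\im\psi}$ along the (possibly rotated) ray and setting $s = r\,|\cos\psi|$, the factor $|t_1|^{-1}$ lowers the algebraic power by one, and one is left with bounding
\[
\int_0^\infty \frac{\e^{-s}\,s^{\nu}}{|z-t_1(s)|}\,\d s \le C\,\frac{\sqrt{\nu+1}\,\Gamma(\nu+1)}{|z|},\qquad \nu>-1,
\]
with $\nu=N_1-1$ supplying the base case $m=1$ (here one may equally start from \eqref{eq33}) and $\nu=N_1-2$ supplying the inductive step, the extra $\Gamma(N_1-1)$ and $|\sigma|^{-(N_1-1)}$ combining with $P_2$ to reproduce the target product after the harmless over-estimate $\sqrt{N_1}\le\sqrt{\Sigma_1}$.

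The two $\varphi$-regimes are then distinguished by how far one must rotate the $t_1$-contour. For $|\varphi|\le\pi$ the contour can be kept with $\arg(\sigma t_1)\le\pi$, the inner $\cos$-factor stays equal to $1$, and the core estimate with $\psi=\pi$ delivers the factor $1$. For $\pi<\varphi<\frac{3\pi}{2}$ the pole $t_1=z$ (sitting at $\arg(\sigma t_1)=\varphi$) must be passed, so one rotates into $\arg(\sigma t_1)=\psi\in(\pi,\frac{3\pi}{2})$, staying just off the pole so that the residual separation controls $|z-t_1|$ from below. Two powers of $|\cos\psi|$ are then generated: the induction hypothesis for $F^{(m-1)}(t_1)$ now contributes $|\cos\psi|^{-\Sigma_2}$ (with $\Sigma_2=N_2+\cdots+N_m$), while the substitution $s=r\,|\cos\psi|$ in the outer integral contributes $|\cos\psi|^{-(N_1-1)}$. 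Their product is $|\cos\psi|^{-(\Sigma_1-1)}\le|\cos\psi|^{-\Sigma_1}$, and letting $\psi\to\varphi$ turns this into $|\cos\varphi|^{-\Sigma_1}$. This is exactly the mechanism by which the full exponent $\Sigma_1$ accumulates from the individual levels, and it explains why $\Sigma_1=N_1+\cdots+N_m$ (rather than merely $N_1$) appears in the statement.

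The genuinely delicate point, and the main obstacle, is the core estimate near $\varphi=\pm\pi$. Away from these rays it is elementary, since $|z-t_1|^2=|z|^2+(s/|\sigma|)^2+2|z|(s/|\sigma|)\cos\varphi$ gives $|z-t_1|\ge|z|$ when $\cos\varphi\ge 0$ and $|z-t_1|\ge|z|\,|\sin\varphi|$ otherwise, leaving a plain $\Gamma(\nu+1)$. But as $\varphi\to\pi$ the pole lies on the contour: there $z-t_1$ is proportional to $s-|\sigma z|$ and the naive integral diverges logarithmically. One must invoke the indentation prescription built into \eqref{eq27}, splitting the contribution into a principal value plus a half-residue. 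The crucial observation is that $\e^{-s}s^{\nu}$ is sharply peaked at $s=\nu$ with width of order $\sqrt{\nu}$, so when the pole $s=|\sigma z|$ coalesces with this peak the principal value undergoes an almost complete odd cancellation about $s=|\sigma z|$; quantifying that cancellation is precisely what yields the gain of one full power of $\sqrt{\nu+1}$ and makes the stated bound sharp there. I expect this local analysis of the pole–saddle coalescence to be the hard part; once it is in place, the passage to $\pi<\varphi<\frac{3\pi}{2}$, the conjugate sector, and the assembly of the $\Gamma$-factors and powers of $|\sigma|$ through the induction are routine bookkeeping.
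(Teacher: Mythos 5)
Your induction skeleton coincides with the paper's: write $F^{(m)}$ as a single $t_1$-integral against $F^{(m-1)}$, observe that on the integration ray the inner terminant sits at $\arg(\sigma t_1)=\pi$, where the induction hypothesis carries the factor $1$, and let the exponents accumulate to $\Sigma_1$ in the sector $\pi<\varphi<\frac{3\pi}{2}$. The divergence is in how the pole $t_1=z$ is treated when $\varphi$ is at or near $\pm\pi$, and that is exactly where your proposal has genuine gaps rather than ``routine bookkeeping''. First, your scalar core estimate --- the entire content of the lemma --- is not proved; as displayed (with $|z-t_1(s)|$ inside the integral) it is false at $\varphi=\pi$, since the integrand is then positive with a non-integrable $1/\left|s-|\sigma z|\right|$ singularity, and the principal-value-plus-half-residue version you intend is precisely the pole--saddle coalescence analysis you defer. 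Second, and more structurally: a principal-value cancellation cannot be run \emph{after} you have replaced $F^{(m-1)}(t_1)$ by the modulus bound $c_{m-1}P_2/|t_1|$ from the induction hypothesis. The odd cancellation about $s=|\sigma z|$ requires control of the variation (a derivative or Lipschitz bound) of the full density $\e^{-s}s^{\nu}F^{(m-1)}(t_1(s))$ along the contour, and the hypothesis \eqref{eq38} is a pure modulus bound supplying no such control; you would need to strengthen the induction (say, sectorial bounds plus Cauchy estimates for derivatives), which you never mention. Third, in the sector $\pi<\varphi<\frac{3\pi}{2}$ you rotate to $\psi$ ``just off the pole'' and then ``let $\psi\to\varphi$''; this ignores the pole-separation factor, essentially $\csc(\varphi-\psi)$, which blows up in that limit, so the limit cannot be taken as stated.

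The paper's proof avoids all of this and needs nothing beyond modulus bounds --- which is why its induction closes. For $\frac{\pi}{2}<\varphi\le\pi$ it rotates the contour \emph{away} from the pole through a fixed acute angle $\alpha$ and uses only the elementary inequality \eqref{eq35}; the optimal choice $\alpha=\arccot(\sqrt{\Sigma_1-1})$ makes the rotation cost $(\cos\alpha)^{-(\Sigma_1-1)}\le\sqrt{\e}$ while the pole-separation factor is $\csc(\varphi-\alpha)\le\sqrt{\Sigma_1}$. Thus in the paper the $\sqrt{\Sigma_1}$ is the price of keeping the contour a definite angle away from the pole, and it genuinely involves the full sum $\Sigma_1$ because the rotation cost carries the exponent $\Sigma_1-1$; it is not, as in your accounting, a harmless overestimate of a $\sqrt{N_1}$ extracted from cancellation at coalescence (your $\sqrt{N_1}$ claim rests on the unproven core estimate). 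For $\pi<\varphi<\frac{3\pi}{2}$ the paper does not rotate at all: it applies the connection formula \eqref{eq49} to reflect to $\varphi-2\pi\in\left(-\pi,-\frac{\pi}{2}\right)$ and bounds the extra term $2\pi\im\,\e^{\sigma z}z^{N_1-1}F^{(m-1)}(z)$ by the induction hypothesis together with $\max_{r>0}r^{M}\e^{-ar}=(M/a)^{M}\e^{-M}$, which is where $|\cos\varphi|^{-\Sigma_1}$ comes from. (Also note the paper does not reprove the base case $m=1$; it quotes Proposition B.1 of Bennett--Howls--Nemes--Olde Daalhuis.) Your route could likely be completed, but the two steps you label ``routine'' and ``the hard part'' are jointly the whole lemma, and the second needs an idea --- variation control of the inner terminant --- that the proposal lacks.
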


\begin{proof} Throughout the proof, we shall use the following inequality:
\begin{equation}\label{eq35}
\left| 1 + \frac{t}{w} \right| \ge \begin{cases} 1 & \text{if } \;  |\arg w| \leq \frac{\pi}{2}, \\[0.25em] |\sin(\arg w)| & \text{if } \; \frac{\pi}{2} < |\arg w| < \pi,\end{cases}
\end{equation}
where $t > 0$. The proof of \eqref{eq35} is elementary and is therefore left to the reader.

We proceed by induction on $m$. The base case $m = 1$ was proved in \cite[Proposition B.1]{BHNOD2018}. Assume that the statement holds for $1,2,\ldots,m-1$ ($m \geq 2$). If $|\varphi|<\pi$, we can write, using \eqref{eq34} and \eqref{eq27}, that
\begin{equation}\label{eq36}
F^{(m)} \!\left( z;\!\begin{array}{c}
   N_1 ,\!\!\!  \\
   \sigma ,\!\!\!  \\
\end{array}\begin{array}{c}
   \ldots ,\!\!\!  \\
    \ldots ,\!\!\!  \\
\end{array}\begin{array}{c}
   N_m \!\!  \\
   \sigma \!\! \\
\end{array} \right) = \e^{\pi \im N_1 } \frac{1}{z}\frac{1}{\sigma^{N_1}}\int_0^{+\infty}\frac{\e^{-t} t^{N_1-1}}{1+t/(\sigma z)} F^{(m-1)}\!\left( \frac{t}{\sigma}\e^{\pi \im};\! \begin{array}{c}
   N_2 ,\!\!\!  \\
   \sigma ,\!\!\!  \\
\end{array}\begin{array}{c}
   \ldots ,\!\!\!  \\
    \ldots ,\!\!\!  \\
\end{array}\begin{array}{c}
   N_m \!\!  \\
   \sigma \!\! \\
\end{array} \right)\d t .
\end{equation}
Then by the induction hypothesis and the inequality \eqref{eq35}, we readily find that
\begin{align*}
 \left| F^{(m)} \!\left( z;\!\begin{array}{c}
   N_1 ,\!\!\!  \\
   \sigma ,\!\!\!  \\
\end{array}\begin{array}{c}
   \ldots ,\!\!\!  \\
    \ldots ,\!\!\!  \\
\end{array}\begin{array}{c}
   N_m \!\!  \\
   \sigma \!\! \\
\end{array} \right)\right| & \le c_{m - 1} \frac{1}{\left| z \right|}\frac{\sqrt {N_m } \Gamma (N_m )}{\left| \sigma  \right|^{N_m } }\frac{\Gamma (N_1  - 1)}{\left| \sigma  \right|^{N_1  - 1} }\prod\limits_{k = 2}^{m - 1} \frac{\sqrt {\Sigma_k} \Gamma (N_k  - 1)}{\left| \sigma  \right|^{N_k  - 1} } 
\\ & < c_{m - 1} \frac{1}{\left| z \right|}\frac{\sqrt {N_m } \Gamma (N_m )}{\left| \sigma  \right|^{N_m } }\prod\limits_{k = 1}^{m - 1} \frac{\sqrt {\Sigma_k } \Gamma (N_k  - 1)}{\left| \sigma  \right|^{N_k  - 1} } ,
\end{align*}
provided $|\varphi|\leq \frac{\pi}{2}$.

Consider now the phase range $\frac{\pi}{2} < \varphi \leq \pi$. We deform the contour of integration in \eqref{eq36} by rotating it through an acute angle $\alpha$. Thus, by appealing to Cauchy's theorem and analytic continuation, we have, for an arbitrary $0 < \alpha < \frac{\pi}{2}$, that
\begin{align*}
F^{(m)} \!\left( z;\!\begin{array}{c}
   N_1 ,\!\!\!  \\
   \sigma ,\!\!\!  \\
\end{array}\begin{array}{c}
   \ldots ,\!\!\!  \\
    \ldots ,\!\!\!  \\
\end{array}\begin{array}{c}
   N_m \!\!  \\
   \sigma \!\! \\
\end{array} \right) = \; & \e^{ \pi \im N_1} \frac{1}{z}\frac{1}{\sigma^{N_1}} \left( \frac{\e^{\im\alpha } }{\cos \alpha } \right)^{N_1 } \\ & \times  \int_0^{+\infty}\frac{\e^{-\frac{s\e^{\im\alpha }}{\cos\alpha}} s^{N_1-1}}{1+s\e^{\im\alpha }/(\sigma z \cos\alpha)} F^{(m-1)}\!\left( \frac{s\e^{\im(\alpha + \pi)}}{\sigma \cos\alpha};\! \begin{array}{c}
   N_2 ,\!\!\!  \\
   \sigma ,\!\!\!  \\
\end{array}\begin{array}{c}
   \ldots ,\!\!\!  \\
    \ldots ,\!\!\!  \\
\end{array}\begin{array}{c}
   N_m \!\!  \\
   \sigma \!\! \\
\end{array} \right)\d s
\end{align*}
when $\frac{\pi}{2} < \varphi \leq \pi$. Employing the inequality \eqref{eq35}, we then obtain the bound
\begin{align*}
\left| F^{(m)} \!\left( z;\!\begin{array}{c}
   N_1 ,\!\!\!  \\
   \sigma ,\!\!\!  \\
\end{array}\begin{array}{c}
   \ldots ,\!\!\!  \\
    \ldots ,\!\!\!  \\
\end{array}\begin{array}{c}
   N_m \!\!  \\
   \sigma \!\! \\
\end{array} \right)\right| \le \; &  \frac{1}{|z|}\frac{1}{|\sigma|^{N_1 }}\frac{1}{(\cos\alpha)^{N_1 }} \\ & \times \int_0^{+\infty} \e^{ - s} s^{N_1 - 1} \left| F^{(m-1)}\!\left( \frac{s\e^{\im(\alpha + \pi)}}{\sigma \cos\alpha};\!\begin{array}{c}
   N_2 ,\!\!\!  \\
   \sigma ,\!\!\!  \\
\end{array}\begin{array}{c}
   \ldots ,\!\!\!  \\
    \ldots ,\!\!\!  \\
\end{array}\begin{array}{c}
   N_m \!\!  \\
   \sigma \!\! \\
\end{array} \right) \right|\d s 
\\ & \times \begin{cases} 1 & \text{if }\; \frac{\pi}{2} < \varphi \leq \frac{\pi}{2} + \alpha, \\[0.25em] \csc (\varphi-\alpha) & \text{if }\; 
\frac{\pi}{2}+ \alpha < \varphi \leq \pi. \end{cases}
\end{align*} 
We can simplify this result further by using the induction hypothesis to deduce
\begin{gather}\label{eq37}
\begin{split}
\left| F^{(m)} \!\left( z;\!\begin{array}{c}
   N_1 ,\!\!\!  \\
   \sigma ,\!\!\!  \\
\end{array}\begin{array}{c}
   \ldots ,\!\!\!  \\
    \ldots ,\!\!\!  \\
\end{array}\begin{array}{c}
   N_m \!\!  \\
   \sigma \!\! \\
\end{array} \right)\right| \le \; &  c_{m - 1} \frac{1}{\left| z \right|}\frac{\sqrt {N_m } \Gamma (N_m )}{\left| \sigma  \right|^{N_m } }\frac{\Gamma (N_1  - 1)}{\left| \sigma  \right|^{N_1  - 1} }\prod\limits_{k = 2}^{m - 1} \frac{\sqrt {\Sigma_k} \Gamma (N_k  - 1)}{\left| \sigma  \right|^{N_k  - 1} } 
\\ & \times \frac{1}{(\cos \alpha )^{\Sigma_1- 1} }
\times \begin{cases} 1 & \text{if }\; \frac{\pi}{2} < \varphi \leq \frac{\pi}{2} + \alpha, \\[0.25em] \csc (\varphi-\alpha) & \text{if }\; 
\frac{\pi}{2}+ \alpha < \varphi \leq \pi. \end{cases}
\end{split}
\end{gather}
We now choose the value of $\alpha$ which minimises the right-hand side of this inequality when $\varphi=\pi$, namely $\alpha  = \arccot (\sqrt {\Sigma_1 - 1} )$. With this choice of $\alpha$, the factor in the second line of \eqref{eq37} may be bounded by $\sqrt {\e\, \Sigma_1}$ for all $\frac{\pi}{2} < \varphi \leq \pi$ (see the proof of Proposition B.1 in \cite{BHNOD2018}). A similar proof holds for the conjugate sector $-\pi \leq \varphi < -\frac{\pi}{2}$.

Finally, it remains to consider the sector $\pi <\varphi<\frac{3\pi}{2}$. The proof is based on the functional relation (cf. \eqref{eq49})
\begin{align*}
F^{(m)} \!\left( z;\!\begin{array}{c}
   N_1 ,\!\!\!  \\
   \sigma ,\!\!\!  \\
\end{array}\begin{array}{c}
   \ldots ,\!\!\!  \\
    \ldots ,\!\!\!  \\
\end{array}\begin{array}{c}
   N_m \!\!  \\
   \sigma \!\! \\
\end{array} \right) = \; & F^{(m)} \!\left( z\e^{-2\pi \im};\!\begin{array}{c}
   N_1 ,\!\!\!  \\
   \sigma ,\!\!\!  \\
\end{array}\begin{array}{c}
   \ldots ,\!\!\!  \\
    \ldots ,\!\!\!  \\
\end{array}\begin{array}{c}
   N_m \!\!  \\
   \sigma \!\! \\
\end{array} \right) \\& + 2\pi \im \e^{\sigma z} z^{N_1-1} F^{(m-1)} \!\left( z;\!\begin{array}{c}
   N_2 ,\!\!\!  \\
   \sigma ,\!\!\!  \\
\end{array}\begin{array}{c}
   \ldots ,\!\!\!  \\
    \ldots ,\!\!\!  \\
\end{array}\begin{array}{c}
   N_m \!\!  \\
   \sigma \!\! \\
\end{array} \right).
\end{align*}
The first term on the right-hand side can be estimated by applying the result of the previous paragraph. To estimate the second term, we use the induction hypothesis. Accordingly,
\begin{multline*}
 \left| F^{(m)} \!\left( z;\!\begin{array}{c}
   N_1 ,\!\!\!  \\
   \sigma ,\!\!\!  \\
\end{array}\begin{array}{c}
   \ldots ,\!\!\!  \\
    \ldots ,\!\!\!  \\
\end{array}\begin{array}{c}
   N_m \!\!  \\
   \sigma \!\! \\
\end{array} \right)\right| \le  \sqrt{\e}c_{m-1} \frac{1}{|z|}\frac{\sqrt{N_m } \Gamma (N_m )}{|\sigma|^{N_m }}\prod\limits_{k = 1}^{m - 1} \frac{\sqrt{\Sigma_k} \Gamma(N_k  - 1)}{|\sigma|^{N_k- 1}} \\ +\frac{2\pi \left| \sigma z \right|^{N_1  - 1} \e^{-\left| \sigma z \right|\left| \cos \varphi\right|}}{\sqrt {\Sigma _1 } \Gamma (N_1  - 1) }\frac{1}{\left| \cos \varphi  \right|^{\Sigma _2 } }c_{m - 1} \frac{1}{\left| z \right|}\frac{\sqrt {N_m } \Gamma (N_m )}{\left| \sigma  \right|^{N_m } }\prod\limits_{k = 1}^{m - 1} \frac{\sqrt {\Sigma _k } \Gamma (N_k  - 1)}{\left| \sigma  \right|^{N_k  - 1} } .
\end{multline*}
Notice that the quantity $r^M\e^{-a r}$, as a function of $r > 0$, takes its maximum value at $r = M/a$ when $a>0$ and $M > 0$. We therefore find that
\begin{align*}
\frac{2\pi \left| \sigma z \right|^{N_1  - 1} \e^{-\left| \sigma z \right|\left| \cos \varphi \right|}}{\sqrt {\Sigma _1 }\Gamma (N_1  - 1) }\frac{1}{\left| \cos \varphi  \right|^{\Sigma _2 } }  \le  \frac{\sqrt {2\pi (N_1  - 1)}}{\sqrt {\Sigma _1 } \Gamma^\ast (N_1  - 1)}\frac{1}{\left| \cos \varphi \right|^{\Sigma _1  - 1} } < \sqrt {2\pi } \frac{1}{\left| \cos \varphi \right|^{\Sigma _1 }}.
\end{align*}
The second inequality can be obtained from the fact that $\Gamma^\ast (M) \ge 1$ for any $M > 0$ (see, for instance, \cite[\href{https://dlmf.nist.gov/5.6.E1}{Eq. 5.6.1}]{NIST:DLMF}). This completes the proof of the lemma.
\end{proof}

We now define the value of the hyperterminant $F^{(m)}$ at the origin to be
\[
F^{(m)}\!\left( 0;\! \begin{array}{c}
   N_1 ,\,\ldots,\!\!\!\!  \\ 
   \,\sigma _1 ,\,\,\ldots,\!\!\!\!  \\
\end{array}\begin{array}{c}
   N_m \!\!  \\
   \sigma _m \!\!  \\
\end{array} \right)  = \mathop {\lim }\limits_{\substack{\left| z \right| \to 0\\ \left| \arg (\sigma_1 z) \right| < \pi }} F^{(m)}\!\left( z;\! \begin{array}{c}
   N_1 ,\,\ldots,\!\!\!\!  \\ 
   \,\sigma _1 ,\,\,\ldots,\!\!\!\!  \\
\end{array}\begin{array}{c}
   N_m \!\!  \\
   \sigma _m \!\!  \\
\end{array} \right),
\]
provided that this limit exists.

\begin{lemma}\label{lemma2} Let $m\geq 2$ be a positive integer and $N_1,\ldots,N_m$ be an arbitrary set of real numbers such that $N_1>2$ and $N_k>1$ for $k=2,\ldots,m$. Let $\sigma$ be any element of $\widehat{\mathbb C}$. Then
\[
\left| F^{(m)} \!\left( 0;\!\begin{array}{c}
   N_1 ,\!\!\!  \\
   \sigma ,\!\!\!  \\
\end{array}\begin{array}{c}
   \ldots ,\!\!\!  \\
    \ldots ,\!\!\!  \\
\end{array}\begin{array}{c}
   N_m \!\!  \\
   \sigma \!\! \\
\end{array} \right) \right| \le c_m \frac{\sqrt {N_m } \Gamma (N_m )}{\left| \sigma  \right|^{N_m }}\frac{\Gamma (N_1  - 2)}{\left| \sigma  \right|^{N_1  - 2} }\prod\limits_{k = 2}^{m - 1} \frac{\sqrt {\Sigma_k } \Gamma (N_k  - 1)}{\left| \sigma  \right|^{N_k  - 1} }
\]
where $c_m$ is the constant appearing in Lemma \ref{lemma1} and, as before, $\Sigma_k  = N_k +N_{k+1} +  \ldots  + N_m$.
\end{lemma}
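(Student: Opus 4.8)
The plan is to reduce the $m$-fold integral defining $F^{(m)}$ at the origin to a single integral whose integrand contains the lower-level hyperterminant $F^{(m-1)}$, and then to invoke Lemma \ref{lemma1}. Setting $z=0$ in the definition \eqref{eq34}, the leading factor $t_1^{N_1-1}/(z-t_1)$ collapses to $-t_1^{N_1-2}$, while the remaining $(m-1)$-fold integral is, by \eqref{eq34} and \eqref{eq27}, precisely $F^{(m-1)}$ with argument $t_1$ and singulants $\sigma$. Thus I would first establish the reduction formula
\[
F^{(m)} \!\left( 0;\!\begin{array}{c} N_1 ,\!\!\! \\ \sigma ,\!\!\! \\ \end{array}\begin{array}{c} \ldots ,\!\!\! \\ \ldots ,\!\!\! \\ \end{array}\begin{array}{c} N_m \!\! \\ \sigma \!\! \\ \end{array} \right) = -\int_0^{[\pi-\arg\sigma]} \e^{\sigma t_1} t_1^{N_1-2} F^{(m-1)} \!\left( t_1;\!\begin{array}{c} N_2 ,\!\!\! \\ \sigma ,\!\!\! \\ \end{array}\begin{array}{c} \ldots ,\!\!\! \\ \ldots ,\!\!\! \\ \end{array}\begin{array}{c} N_m \!\! \\ \sigma \!\! \\ \end{array} \right) \d t_1 ,
\]
obtained by letting $|z|\to 0$ with $|\arg(\sigma z)|<\pi$ and $\arg(\sigma z)\neq\pi$. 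The existence of the limit and its coincidence with the right-hand side would follow from dominated convergence: for such $z$ the pole $t_1=z$ stays bounded away from the ray $\arg(\sigma t_1)=\pi$, so that $|t_1^{N_1-1}/(z-t_1)|\le C|t_1|^{N_1-2}$ uniformly for small $|z|$, while the $1/|t_1|$ growth of $F^{(m-1)}(t_1;\ldots)$ near the origin supplied by Lemma \ref{lemma1} makes the integrand dominated by a constant multiple of $|t_1|^{N_1-3}$. This is integrable at the origin exactly when $N_1>2$, which is the hypothesis of the lemma.

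Next I would estimate the integrand along the ray of integration. There $\arg t_1=\pi-\arg\sigma$, hence $\arg(\sigma t_1)=\pi$ and $\sigma t_1=-|\sigma|\,|t_1|$, so $\e^{\sigma t_1}=\e^{-|\sigma||t_1|}$. Since $\arg(\sigma t_1)=\pi$ lies in the regime $|\varphi|\leq\pi$ of Lemma \ref{lemma1}, that bound applies with the trivial factor $1$ (no $|\cos\varphi|^{-\Sigma_1}$ enhancement), yielding
\[
\left| F^{(m-1)} \!\left( t_1;\!\begin{array}{c} N_2 ,\!\!\! \\ \sigma ,\!\!\! \\ \end{array}\begin{array}{c} \ldots ,\!\!\! \\ \ldots ,\!\!\! \\ \end{array}\begin{array}{c} N_m \!\! \\ \sigma \!\! \\ \end{array} \right)\right| \le c_{m-1}\frac{1}{|t_1|}\frac{\sqrt{N_m}\,\Gamma(N_m)}{|\sigma|^{N_m}}\prod_{k=2}^{m-1}\frac{\sqrt{\Sigma_k}\,\Gamma(N_k-1)}{|\sigma|^{N_k-1}} .
\]
Writing $t_1=s\e^{\im(\pi-\arg\sigma)}$, the factor $t_1^{N_1-2}$ combines with the $1/|t_1|$ to give $s^{N_1-3}$, and the remaining integral evaluates as the Euler integral $\int_0^\infty \e^{-|\sigma|s}s^{N_1-3}\d s=\Gamma(N_1-2)/|\sigma|^{N_1-2}$, once more requiring $N_1>2$ for convergence. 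Collecting the constants and using $c_{m-1}\le c_m$, which holds since the constants produced in the inductive proof of Lemma \ref{lemma1} are non-decreasing in $m$, gives precisely the asserted bound.

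The main obstacle I anticipate is the careful justification of the reduction formula and of the applicability of Lemma \ref{lemma1} directly on the critical ray $\arg(\sigma t_1)=\pi$. Because all singulants coincide, the pole of $(t_{k-1}-t_k)^{-1}$ sits on the $t_k$-contour, so $F^{(m-1)}(t_1;\ldots)$ must be interpreted through the limiting prescription \eqref{eq27}. I would handle this by carrying the auxiliary rotations $\sigma_k\mapsto\sigma_k\e^{(m-k)\varepsilon\im}$ of \eqref{eq27} through the entire argument and passing to the limit $\varepsilon\to 0^+$ only at the very end, so that at every intermediate stage the contours are genuinely separated and Lemma \ref{lemma1} applies verbatim. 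The remaining verifications — the dominated-convergence estimate near the origin and the evaluation of the Euler integral — are routine.
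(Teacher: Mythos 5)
Your proof is correct and takes essentially the same route as the paper: your reduction formula for $F^{(m)}$ at the origin is exactly the paper's identity \eqref{eq36} evaluated in the limit $z \to 0$ (written in contour rather than parametrized form), and the subsequent application of Lemma \ref{lemma1} on the ray $\arg(\sigma t_1)=\pi$ followed by the Euler integral $\int_0^\infty \e^{-|\sigma|s}s^{N_1-3}\,\d s = \Gamma(N_1-2)/|\sigma|^{N_1-2}$ is precisely the paper's estimate. The additional justifications you supply (dominated convergence near the origin, which is where the hypothesis $N_1>2$ enters, and carrying the $\varepsilon$-prescription of \eqref{eq27} through to the end) merely make explicit steps the paper leaves implicit.
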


\begin{proof} From \eqref{eq36}, we can infer that
\[
F^{(m)} \!\left( 0;\!\begin{array}{c}
   N_1 ,\!\!\!  \\
   \sigma ,\!\!\!  \\
\end{array}\begin{array}{c}
   \ldots ,\!\!\!  \\
    \ldots ,\!\!\!  \\
\end{array}\begin{array}{c}
   N_m \!\!  \\
   \sigma \!\! \\
\end{array} \right) = \e^{\pi \im N_1 } \frac{1}{\sigma^{N_1-1}}\int_0^{+\infty} \e^{-t} t^{N_1-2}F^{(m-1)}\!\left( \frac{t}{\sigma}\e^{\pi \im};\! \begin{array}{c}
   N_2 ,\!\!\!  \\
   \sigma ,\!\!\!  \\
\end{array}\begin{array}{c}
   \ldots ,\!\!\!  \\
    \ldots ,\!\!\!  \\
\end{array}\begin{array}{c}
   N_m \!\!  \\
   \sigma \!\! \\
\end{array} \right)\d t .
\]
The desired result now follows by estimating the right-hand side using the inequality \eqref{eq38}.
\end{proof}

\begin{proof}[Proof of Theorem \ref{maintheorem}] Throughout the proof, we will frequently make use of the following inequality:
\begin{equation}\label{eq42}
\left| \erfc(w) \right| \leq \begin{cases}
    K|\e^{-w^2}|, & \text{if }\; \Re (w)\geq 0, \\
    K, & \text{if }\; \Re(w)< 0,
  \end{cases}
\end{equation}
with an absolute constant $K>0$. This inequality can be verified by reference to the large-$w$ behaviour of $\erfc(w)$ (see, e.g., \cite[\href{https://dlmf.nist.gov/7.12.i}{\S7.12(i)}]{NIST:DLMF}) and the fact that $\erfc$ is an entire function. We leave the details to the interested reader.

We begin with the proof of \eqref{eq41}. The proof is by induction on $n$. For the base case $n=0$, we combine \eqref{eq29}, \eqref{eq39} and \eqref{eq40} to obtain
\begin{multline*}
\frac{\e^{ - m\sigma z} }{(2\pi \im)^m z^{mN - m}} F^{(m)}\!\left( z;\! \begin{array}{c}
   N ,\,\ldots,\!\!\!\!  \\ 
   \,\sigma ,\,\,\ldots,\!\!\!\!  \\
\end{array}\begin{array}{c}
   N \!\!  \\
   \sigma  \!\!  \\
\end{array} \right) \\ = \sum\limits_{\pi (m)} \prod\limits_{j = 1}^m \frac{1}{j^{k_j } k_j !}\left( \frac{1}{2}\erfc\Big( c(\varphi )\sqrt {\tfrac{j}{2}\left| \sigma z \right|} \Big) + \O\bigg(\bigg| \frac{\e^{ - \frac{j}{2}\left| \sigma z \right|c^2 (\varphi )} }{\sqrt {j\left| \sigma z \right|} }  \bigg| \bigg)\right)^{k_j }   .
\end{multline*}
In arriving at this expression, we made use of the fact that $k_1+2k_2+\ldots +m k_m=m$. Expanding the right-hand side using the binomial theorem gives \eqref{eq41} but with the error term
\begin{equation}\label{eq43}
\O(1)\sum\limits_{\pi (m)} \prod\limits_{j = 1}^m \frac{1}{j^{k_j } k_j !}\frac{\e^{ - \frac{jk_j }{2}\left| \sigma z \right|c^2 (\varphi )} }{\sqrt {j\left| \sigma z \right|} }\sum\limits_{r = 0}^{k_j - 1}\binom{k_j}{r}\frac{1}{2^r}\erfc^r\Big( c(\varphi )\sqrt {\tfrac{j}{2}\left| \sigma z \right|} \Big) \frac{\e^{\frac{jr}{2}\left| \sigma z \right|c^2 (\varphi )} }{(j\left| \sigma z \right|)^{(k_j  - 1 - r)/2}} .
\end{equation}
Consider first the case that $-\pi < -\pi + \delta \le \varphi \le \pi$. In this case $\Re(c(\varphi))\geq 0$ (see \cite[Fig. 4]{Olver1991}), and therefore \eqref{eq42} implies that the inner sum in \eqref{eq43} is $\O(1)$ for large $|\sigma z|$. Since $k_1+2k_2+\ldots +m k_m=m$, \eqref{eq43} can then be simplified to the form given in \eqref{eq41}. If $\pi < \varphi \le 3\pi - \delta < 3\pi$, then $\Re(c(\varphi))< 0$ (cf. \cite[Fig. 4]{Olver1991}). Hence, in this case the inner sum in \eqref{eq43} is
\[
\O(1)\e^{\frac{j(k_j  - 1)}{2}\left| \sigma z \right|c^2 (\varphi )} ,
\]
giving the desired error term in \eqref{eq41}. 

Suppose now that \eqref{eq41} holds up to $n-1$, where $n$ is a positive integer. The induction step relies upon the identity (see \cite[Eq. (2.8)]{ABOD1997})
\begin{equation}\label{eq44}
\begin{multlined}
 z F^{(m)}\!\left( z;\! \begin{array}{c}
   N ,\,\ldots,\!\!\!\!  \\ 
   \,\sigma ,\,\,\ldots,\!\!\!\!  \\
\end{array}\begin{array}{c}
   N, \!\!\!  \\
   \sigma,  \!\!\!  \\
\end{array}\begin{array}{c}
   N-n \!\!  \\
   \sigma  \!\!  \\
\end{array} \right) = F^{(m)}\!\left( z;\! \begin{array}{c}
   N ,\,\ldots,\!\!\!\!  \\ 
   \,\sigma ,\,\,\ldots,\!\!\!\!  \\
\end{array}\begin{array}{c}
   N, \!\!\!  \\
   \sigma,  \!\!\!  \\
\end{array}\begin{array}{c}
   N-n+1 \!\!  \\
   \sigma  \!\!  \\
\end{array} \right)\\  -\sum_{k=0}^{m-1} F^{(k)}\!\left( z;\! \begin{array}{c}
   N ,\,\ldots,\!\!\!\!  \\ 
   \,\sigma ,\,\,\ldots,\!\!\!\!  \\
\end{array}\begin{array}{c}
   N \!\!  \\
   \sigma  \!\!  \\
\end{array} \right) F^{(m-k)}\!\left( 0;\! \begin{array}{c}
   N+1, \!\!\!  \\
   \sigma,  \!\!\!  \\
\end{array}\begin{array}{c}
   N ,\,\ldots,\!\!\!\!  \\ 
   \,\sigma ,\,\,\ldots,\!\!\!\!  \\
\end{array}\begin{array}{c}
   N-n \!\!  \\
   \sigma  \!\!  \\
\end{array} \right).
\end{multlined}
\end{equation}
We will estimate the sum in \eqref{eq44} under the assumptions that $|\sigma z|$ is large, $N\sim|\sigma z|$, and $m$ and $n$ are both fixed. From Lemma \ref{lemma2}, we can assert that
\begin{multline*}
 \left| F^{(m-k)}\!\left( 0;\! \begin{array}{c}
   N+1, \!\!\!  \\
   \sigma,  \!\!\!  \\
\end{array}\begin{array}{c}
   N ,\,\ldots,\!\!\!\!  \\ 
   \,\sigma ,\,\,\ldots,\!\!\!\!  \\
\end{array}\begin{array}{c}
   N-n \!\!  \\
   \sigma  \!\!  \\
\end{array} \right) \right| \\ \leq c_{m - k} \frac{\sqrt {N - n} \Gamma (N - n)}{\left| \sigma  \right|^{N - n} }\left( \frac{\Gamma (N - 1)}{\left| \sigma  \right|^{N - 1} }\right)^{m - k - 1} \prod\limits_{j = 1}^{m - k - 2}\sqrt {jN - n}
\end{multline*}
for any $0\leq k\leq m-2$. On replacing the gamma functions by Stirling's approximation, we obtain
\begin{equation}\label{eq46}
F^{(m-k)}\!\left( 0;\! \begin{array}{c}
   N+1, \!\!\!  \\
   \sigma,  \!\!\!  \\
\end{array}\begin{array}{c}
   N ,\,\ldots,\!\!\!\!  \\ 
   \,\sigma ,\,\,\ldots,\!\!\!\!  \\
\end{array}\begin{array}{c}
   N-n \!\!  \\
   \sigma  \!\!  \\
\end{array} \right) = \O(1)\left| z \right|^{(m - k)(N-1)- n+1} \e^{ - (m - k)\left| \sigma z\right|} \frac{1}{\sqrt {\left| \sigma z \right|} } .
\end{equation}
By \cite[Eq. (2.2)]{ABOD2009} this estimate is also valid for $k=m-1$. Assume that $\varphi$ is confined to the sector $-\pi < -\pi + \delta \le \varphi \le \pi$ (and so, in particular, $\Re(c(\varphi))\geq 0$). The combination of the base case (with $k$ in place of $m$) and the inequality \eqref{eq42} yields the crude estimate
\begin{equation}\label{eq47}
\frac{\e^{ - k\sigma z} }{(2\pi \im)^k z^{kN - k}} F^{(k)}\!\left( z;\! \begin{array}{c}
   N ,\,\ldots,\!\!\!\!  \\ 
   \,\sigma ,\,\,\ldots,\!\!\!\!  \\
\end{array}\begin{array}{c}
   N \!\!  \\
   \sigma  \!\!  \\
\end{array} \right) = \O(1)\e^{-\frac{k}{2}\left| \sigma z \right|c^2 (\varphi )}.
\end{equation}
Note that \eqref{eq47} remains true when $k=0$. Since $\Re \left( \frac{1}{2}\left| \sigma z\right|c^2 (\varphi ) \right) = \left| \sigma z \right|(1 + \cos \varphi ) = \left| \sigma z \right| + \Re (\sigma z)$, we can infer from \eqref{eq46} and \eqref{eq47} that the summation in \eqref{eq44} is
\begin{multline*}
 \sum\limits_{k = 0}^{m - 1} \O(1)\e^{k\sigma z} z^{kN - k} \e^{ - \frac{k}{2}\left| \sigma z \right|c^2 (\varphi )} \left| z \right|^{(m - k)(N-1) - n + 1} \e^{ - (m - k)\left| \sigma z \right|} \frac{1}{\sqrt {\left| \sigma z \right|}} \\ = z^{mN - m - n + 1} \e^{m\sigma z} \O\bigg( \bigg| \cfrac{\e^{ - \frac{m}{2}\left| \sigma z \right|c^2 (\varphi )} }{\sqrt{|\sigma z|} } \bigg| \bigg).
\end{multline*}
Thus, after re-normalisation, \eqref{eq44} becomes
\begin{multline*}
 \frac{\e^{ - m\sigma z} }{(2\pi \im)^m z^{mN - m - n} }F^{(m)}\!\left( z;\! \begin{array}{c}
   N ,\,\ldots,\!\!\!\!  \\ 
   \,\sigma ,\,\,\ldots,\!\!\!\!  \\
\end{array}\begin{array}{c}
   N, \!\!\!  \\
   \sigma,  \!\!\!  \\
\end{array}\begin{array}{c}
   N-n \!\!  \\
   \sigma  \!\!  \\
\end{array} \right) \\ = \frac{\e^{ - m\sigma z} }{(2\pi \im)^m z^{mN - m - n + 1} }F^{(m)}\!\left( z;\! \begin{array}{c}
   N ,\,\ldots,\!\!\!\!  \\ 
   \,\sigma ,\,\,\ldots,\!\!\!\!  \\
\end{array}\begin{array}{c}
   N, \!\!\!  \\
   \sigma,  \!\!\!  \\
\end{array}\begin{array}{c}
   N-n+1 \!\!  \\
   \sigma  \!\!  \\
\end{array} \right)  + \O\bigg( \bigg| \frac{\e^{ - \frac{m}{2}\left| \sigma z \right|c^2 (\varphi )} }{\sqrt{|\sigma z|} } \bigg| \bigg).
\end{multline*}
The desired result now follows by applying the induction hypothesis on the right-hand side of this equality. If $\pi < \varphi \le 3\pi - \delta < 3\pi$, then $\Re(c(\varphi))< 0$ and the left-hand side of \eqref{eq47} is $\mathcal{O}(1)$. Thus, the sum in \eqref{eq44} may be estimated as
\begin{multline*}
 \sum\limits_{k = 0}^{m - 1} \O(1) \e^{k\sigma z} z^{kN - k} \left| z \right|^{(m - k)(N-1) - n + 1} \e^{ - (m - k)\left| \sigma z \right|} \frac{1}{\sqrt {\left| \sigma z \right|}}\\  = z^{mN - m - n + 1} \e^{m\sigma z} \sum\limits_{k = 0}^{m - 1} \O\bigg( \bigg| \frac{\e^{ - \frac{m-k}{2}\left| \sigma z \right|c^2 (\varphi )} }{\sqrt{|\sigma z|} } \bigg| \bigg) = z^{mN - m - n + 1} \e^{m\sigma z} \O\bigg( \bigg| \frac{\e^{ - \frac{1}{2}\left| \sigma z \right|c^2 (\varphi )} }{\sqrt{|\sigma z|} } \bigg| \bigg).
\end{multline*}
We can now proceed analogously to the preceding case and finish the proof of \eqref{eq41}.

The proof of \eqref{eq45} is completely analogous, except that one uses the asymptotic formula (cf. \cite[Eq. (5.11)]{Olver1991})
\[
\frac{\e^{-2\pi \im N}\e^{ - \sigma z} }{2\pi \im z^{N - 1 - n} }F^{(1)} \!\left( z;\!\begin{array}{c}
   N - n \! \\
   \sigma \! \\
\end{array} \right) = -\frac{1}{2}\erfc\Big( \overline{c(-\varphi )}\sqrt{\tfrac{1}{2}| \sigma z |} \Big) + \mathcal{O}\bigg( \bigg| \frac{\e^{ - \frac{1}{2}| \sigma z |c^2 (-\varphi )} }{\sqrt { \left| \sigma z\right|}} \bigg| \bigg),
\]
valid when $-3\pi<-3\pi+\delta \le \varphi \le \pi-\delta<\pi$, in place of \eqref{eq40}. We omit the details.
\end{proof}

\begin{proof}[Proof of Proposition \ref{prop}] Consider first the case when the singulant pair is $(\sigma, \sigma \e^{-\pi\im})$. If we make a change of integration variable from $t_2$ to $u$ by $u = t_2/(t_1\e^{\pi \im})$ and then from $t_1$ to $s$ by $s = t_1(1 + u)$, \eqref{eq34} is recast as
\[
F^{(2)} \!\left( z;\!\begin{array}{c}
     N,\!\!\!\!   \\ 
     \sigma ,\!\!\!\! \\
\end{array}\begin{array}{c}
    N-n\! \\ 
    \sigma \e^{-\pi\im} \!\\
\end{array} \right) = ( - 1)^n \e^{\pi \im N} \int_0^{ + \infty } \frac{u^{N - n - 1} }{(1 + u)^{2N - n - 1}}F^{(1)}\! \left( (1 + u)z;\!\begin{array}{c}
   2N - n - 1 \! \\
   \sigma  \! \\
\end{array} \right)\d u.
\]
Using the inequality \eqref{eq38} and the known integral representation of the beta function \cite[\href{https://dlmf.nist.gov/5.12.E3}{Eq. 5.12.3}]{NIST:DLMF}, we deduce
\begin{equation}\label{eq50}
\left| F^{(2)} \!\left( z;\!\begin{array}{c}
     N,\!\!\!\!   \\ 
     \sigma ,\!\!\!\! \\
\end{array}\begin{array}{c}
    N-n\! \\ 
    \sigma \e^{-\pi\im} \!\\
\end{array} \right) \right| \le c_1 \frac{1}{\left| z \right|}\frac{1}{\left| \sigma  \right|^{2N - n - 1} }\frac{\Gamma (N - n)\Gamma (N)}{\sqrt {2N - n - 1} }
\end{equation}
provided $|\varphi|\leq \pi$. If $|\sigma z|$ is large, $N\sim|\sigma z|$, and $n$ is fixed, then, with the aid of Stirling's formula, we find from \eqref{eq50} that
\begin{equation}\label{eq51}
F^{(2)} \!\left( z;\!\begin{array}{c}
     N,\!\!\!\!   \\ 
     \sigma ,\!\!\!\! \\
\end{array}\begin{array}{c}
    N-n\! \\ 
    \sigma \e^{-\pi\im} \!\\
\end{array} \right)= \O(1)\left| z \right|^{2N -2- n} \e^{ - 2\left| \sigma z\right|} \frac{1}{\sqrt {\left| \sigma z\right|} } = z^{2N -2- n } \e^{2\sigma z} \O\bigg( \bigg| \frac{\e^{ - \left| \sigma z \right|c^2 (\varphi )} }{\sqrt{|\sigma z|} } \bigg| \bigg)
\end{equation}
uniformly in the sector $|\varphi| \leq \pi$.

Suppose now that $\pi  < \varphi \le 2\pi - \delta<2\pi$. From \eqref{eq49}, one infers
\[
F^{(2)} \!\left( z;\!\begin{array}{c}
     N,\!\!\!\!   \\ 
     \sigma ,\!\!\!\! \\
\end{array}\begin{array}{c}
    N-n\! \\ 
    \sigma \e^{-\pi\im} \!\\
\end{array} \right)= F^{(2)} \!\left( z\e^{-2\pi \im};\!\begin{array}{c}
     N,\!\!\!\!   \\ 
     \sigma ,\!\!\!\! \\
\end{array}\begin{array}{c}
    N-n\! \\ 
    \sigma \e^{-\pi\im} \!\\
\end{array} \right) + 2\pi \im\e^{\sigma z} z^{N - 1} F^{(1)}\! \left( z;\!\begin{array}{c}
   N - n \! \\
   \sigma \e^{ - \pi \im} \! \\
\end{array} \right).
\]
If we again take $N\sim |\sigma z| \gg 1$, and keep $n$ fixed, then we can estimate the first term on the right-hand side by the quantity on the right-hand side of \eqref{eq51}. Regarding the second term, since $0< \arg(\sigma \e^{ - \pi \im}z) \le \pi - \delta<\pi$, Olver's formula \eqref{eq52} implies that
\[
2\pi \im\e^{\sigma z} z^{N - 1} F^{(1)}\! \left( z;\!\begin{array}{c}
   N - n \! \\
   \sigma \e^{ - \pi \im} \! \\
\end{array} \right) = z^{2N-2-n} \e^{2\sigma z} \O\bigg( \bigg| \frac{\e^{ - \frac{1}{2}\left| \sigma z \right|c^2 (\varphi )} }{\sqrt{|\sigma z|} } \bigg| \bigg).
\]
The conjugate sector $-2\pi < -2\pi + \delta \le \varphi < -\pi$ may be treated in a similar manner.

If the singulant pair is $(\sigma, \sigma \e^{\pi\im})$, the estimate \eqref{eq55} can simply be deduced from the previous case and the equality
\[
F^{(2)} \!\left( z;\!\begin{array}{c}
     N,\!\!\!\!   \\ 
     \sigma ,\!\!\!\! \\
\end{array}\begin{array}{c}
    N-n\! \\ 
    \sigma \e^{\pi\im} \!\\
\end{array} \right) = \e^{-2\pi \im N} F^{(2)} \!\left( z;\!\begin{array}{c}
     N,\!\!\!\!   \\ 
     \sigma ,\!\!\!\! \\
\end{array}\begin{array}{c}
    N-n\! \\ 
    \sigma \e^{-\pi\im} \!\\
\end{array} \right),
\]
which follows readily from the definition \eqref{eq34}.
\end{proof}

\section{Conclusions}\label{section6}

The Stokes phenomenon concerns the sudden change across certain rays in the complex plane, known as Stokes lines, exhibited by the coefficients multiplying exponentially small terms in compound asymptotic expansions. Dingle introduced a set of rules for locating Stokes lines and continuing asymptotic expansions across them. Included among these rules is the ``final main rule" stating that half the discontinuity in form occurs on reaching the Stokes line, and half on leaving it the other side. Berry showed that, if an asymptotic expansion is terminated at or near its numerically least term, the transition between two different asymptotic forms across a Stokes line is effected smoothly and not discontinuously. Furthermore, for a wide class of functions, the coefficient multiplying a subdominant exponential contribution (a Stokes multiplier) possesses a universal structure expressed approximately in terms of an error function whose argument is an appropriate variable describing the transition across a Stokes line.

In this paper, we revisited the well-known asymptotic expansions of the gamma function and its reciprocal. These expansions do not share the simple properties above. In the neighbourhood of a Stokes line, not one but infinitely many exponentially small contributions appear, each associated with its own Stokes multiplier. Moreover, these multipliers may no longer obey Dingle's rule: their values can differ from $\frac{1}{2}$ on a Stokes line and can be non-zero only on the line itself. This unconventional behaviour of the multipliers is a manifestation of an infinite number of higher-order Stokes phenomena. We demonstrated that these phenomena are rapid but smooth transitions in the remainder terms of successive hyperasymptotic re-expansions. The approximate functional form of the Stokes multipliers is, however, no longer described by a single error function but rather by a multivariate polynomial in such functions. The basis of our proof was an identity between hyperterminants, which was originally conjectured by Howls.

The treatment presented here is limited to the situation where all the singulants are equal to each other. In the language of Borel summation, this means that the singularities of the Borel transform are collinear and are equally spaced. This is a common phenomenon, e.g., for transseries solutions to non-linear difference and differential equations (cf. \cite{Boele2001,Braaksma2004,ABOD2005}). The work in this paper can certainly be extended to cover such problems. A more interesting extension is to allow the singulants to have different magnitude but remain collinear. We believe that the smooth interpretation of the higher-order Stokes phenomenon remains possible in such circumstances, although the approximate functional form may involve transcendental functions that are more complicated than an error function.

\section*{Acknowledgement} The author thanks the referees for helpful comments and suggestions for improving the presentation. The author wishes to thank A.~B.~Olde Daalhuis for useful discussions. The author's research was supported by a Premium Postdoctoral Fellowship of the Hungarian Academy of Sciences and by the JSPS Postdoctoral Research Fellowship No. P21020.

\medskip

\end{document}